\theoremstyle{plain}
\newtheorem{thm}{Theorem}[section]
\newtheorem{rk}[thm]{Remark}
\newtheorem{prop}[thm]{Proposition}
\newtheorem{lemma}[thm]{Lemma}
\newtheorem{defi}[thm]{Definition}
\newtheorem{maintheorem}{Theorem}
\newcommand{\re}{{\Bbb R}}
\newcommand{\e}{\epsilon}
\newcommand{\de}{\delta}
\title{Sectional-Anosov flows in higher dimensions}
\author{A. M. L\'opez B.
        \thanks{
{\em Key words and phrases}:
Transitive, Maximal invariant, Sectional-Anosov flow.
This work is partially supported by CAPES, Brazil.}}
\date{}
\begin{document}
\maketitle

\begin{abstract}
A {\em sectional-Anosov flow} on a manifold $M$ is a $C^1$ vector field 
inwardly transverse to the boundary for which the maximal invariant is 
sectional-hyperbolic \cite{mo}. 
We prove that every attractor of every vector field $C^1$ close to 
a transitive sectional-Anosov flow with singularities 
on a compact manifold has a singularity. 
This extends the three-dimensional result obtained in \cite{m}.
\end{abstract}


\section{Introduction}

\noindent
The {\em sectional-Anosov flows} were introduced in \cite{mo} as a generalization of the {\em Anosov flows} including
also the {\em saddle-type hyperbolic attracting sets}, the {\em geometric and multidimensional Lorenz attractors} \cite{abs}, \cite{bpv}, \cite{gw}.
Some properties of these flows have been proved elsewhere in the literature \cite{ap}, \cite{lec}.
In particular, \cite{m} proved that every attractor of every vector field $C^1$ close to a transitive sectional-Anosov flow with singularities on a compact $3$-manifold has a singularity.
A generalization of this result in \cite{ams} asserts that
every attractor of every vector field $C^1$ close to a nonwandering sectional-Anosov flow with singularities of a compact $3$-manifold has a singularity.
In this paper we give a further generalization but to higher dimensions. More precisely, we prove that every attractor of every vector field $C^1$ close to
a transitive sectional-Anosov flow with singularities of a compact manifold has a singularity. We do not prove the result for nonwandering sectional-Anosov flows
due to the lack of certain three-dimensional results like, for instance, the improved sectional-Anosov connecting lemma (compare with \cite{ams}).
Let us state our result in a precise way.

Consider a compact manifold $M$ of dimension $n\geq 3$ (a {\em compact $n$-manifold} for short).
We denote by $\partial M$ the boundary of $M$.
Let ${\cal X}^1(M)$ be the space
of $C^1$ vector fields in $M$ endowed with the
$C^1$ topology.
Fix $X\in {\cal X}^1(M)$, inwardly
transverse to the boundary $\partial M$ and denotes
by $X_t$ the flow of $X$, $t\in I\!\! R$.\\

The $\omega$-limit set of $p\in M$ is the set
$\omega_X(p)$ formed by those $q\in M$ such that $q=\lim_{n\infty}X_{t_n}(p)$ for some
sequence $t_n\to\infty$.
We have $\omega_X(p)\subset \Omega(X)$ for every $p\in M$. \\

Let $\Lambda$ be an compact invariant set of $X$,
i.e. $X_t(\Lambda)=\Lambda$ for all $t\in I\!\! R$.
We say that $\Lambda$ is {\em transitive} if
$\Lambda=\omega_X(p)$ for some $p\in \Lambda$.
We say that $\Lambda$ is {\em singular} if it
contains a singularity of $X$.
We say that $\Lambda$ is
{\em attracting}
if $\Lambda=\cap_{t>0}X_t(U)$
for some compact neighborhood $U$ of $\Lambda$.
This neighborhood is called
{\em isolating block} of $\Lambda$.
It is well known that the isolating block $U$ can be chosen to be
positively invariant, namely $X_t(U)\subset U$ for all
$t>0$.
An {\em attractor} is a transitive attracting set.
An attractor is {\em nontrivial} if it is
not a closed orbit.\\

The {\em maximal invariant} set of $X$ is defined by 
$M(X)= \bigcap_{t \geq 0} X_t(M)$.\\

We denote by $m(L)$ the minimum norm of a linear
operator $L$, i.e., $m(L)= inf_{v \neq 0} \frac{\left\|Lv\right\|}{\left\|v\right\|} $.\\

\begin{defi}
\label{d2}
A compact invariant set
$\Lambda$ of $X$
is {\em partially hyperbolic}
if there is a continuous invariant
splitting
$$
T_\Lambda M=E^s\oplus E^c
$$
such that the following properties
hold for some positive constants $C,\lambda$:

\begin{enumerate}
\item
$E^s$ is {\em contracting}, i.e.
$$
\mid\mid DX_t(x) \left|_{E^s_x}\right. \mid\mid
\leq Ce^{-\lambda t},
$$
for all $x\in \Lambda$ and $t>0$.
\item
$E^s$ {\em dominates} $E^c$, i.e.
$$
\frac{\mid\mid DX_t(x) \left|_{E^s_x}\right. \mid\mid}{m(DX_t(x) \left|_{E^c_x}\right. )}
\leq Ce^{-\lambda t},
$$
for all $x\in \Lambda$ and $t>0$.
\end{enumerate}

We say the central subbundle $E^c_x$ of $\Lambda$ is 
{\em sectionally-expanding} if
$$dim(E^c_x) \geq 2 \quad 
and \quad 
\left| det(DX_t(x) \left|_{L_x}\right. ) \right| \geq C^{-1}e^{\lambda t}, \qquad
\forall x \in \Lambda \quad and \quad t > 0  $$ 
for all two-dimensional subspace $L_x$ of $E^c_x$.
Here $det(DX_t(x) \left|_{L_x}\right. )$ denotes
the jacobian of $DX_t(x)$ along $L_x$.
\end{defi}
\bigskip

\begin{defi}
\label{shs}
A {\em sectional-hyperbolic set}
is a partially hyperbolic set whose singularities (if any) are hyperbolic and whose central subbundle is sectionally-expanding.
\end{defi}

Recall that a singularity of a vector field is hyperbolic if
the eigenvalues of its linear part
have non zero real part.\\

\begin{defi}
\label{secflow}
We say that $X$ is a {\em sectional-Anosov flow} if $M(X)$ is a sectional-hyperbolic
set.
\end{defi}

Our result is the following.

\begin{maintheorem}
\label{thB}
Let $X$ be a transitive sectional-Anosov flow with singularities
of a compact $n$-manifold.
Then, every attractor of every vector field $C^1$ close to $X$ has a singularity.
\end{maintheorem}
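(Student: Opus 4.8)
The plan is to argue by contradiction, after first isolating an elementary rigidity property of $X$.

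\textbf{Step 1 (the only attracting set of $X$ is $M(X)$).} Since $X$ is inwardly transverse to $\partial M$, the manifold $M$ is a positively invariant isolating block, so $M(X)=\bigcap_{t\ge 0}X_t(M)$ is an attracting set, and transitivity gives $p$ with $\omega_X(p)=M(X)$. If $\Lambda$ is any nonempty attracting set of $X$, with positively invariant isolating block $U\subset M$, then $\Lambda$ is invariant and contained in $M$, hence $\Lambda\subset M(X)$; picking $q\in\Lambda\subset M(X)\cap\interior(U)$, the forward orbit of $p$ accumulates $q$ (because $\omega_X(p)=M(X)\ni q$), hence enters $\interior(U)$ and stays there, so $\omega_X(p)$, being a closed invariant subset of $U$, lies in $\bigcap_{t>0}X_t(U)=\Lambda$. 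Thus $M(X)=\omega_X(p)\subset\Lambda\subset M(X)$, i.e.\ $\Lambda=M(X)$. Since $M(X)$ is singular by hypothesis, $X$ has no nonsingular attracting set. It therefore suffices to show that no vector field $C^1$-close to $X$ has a nonsingular attractor; suppose, for contradiction, that there are $Y_n\to X$ in ${\cal X}^1(M)$ and attractors $A_n$ of $Y_n$ with no singularity.

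\textbf{Step 2 (the $A_n$ are hyperbolic attractors).} For $n$ large $Y_n$ is still inwardly transverse to $\partial M$, so $M$ is an isolating block of $M(Y_n)$ and $\limsup_n M(Y_n)\subset M(X)$. Because domination together with the sectional-expanding cone condition is a $C^1$-open property with locally uniform constants, there are $C,\lambda>0$ such that $M(Y_n)$ is sectional-hyperbolic with constants $C,\lambda$ for all large $n$. Each $A_n$ is invariant and contained in $M$, hence $A_n\subset M(Y_n)$, so $A_n$ is sectional-hyperbolic; being a nonsingular compact invariant set, the flow direction $\langle Y_n\rangle$ splits off $E^c$ on $A_n$, and applying the sectional-expanding estimate to the two-planes $\langle Y_n(x)\rangle\oplus\langle v\rangle$ (along which the Jacobian equals, up to the bounded flow-speed factor, the expansion transverse to the flow) together with $E^s$ dominating $E^c$ yields a hyperbolic splitting $E^s\oplus\langle Y_n\rangle\oplus E^u_n$ on $A_n$. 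Thus each $A_n$ is a hyperbolic attractor of $Y_n$.

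\textbf{Step 3 (passing to the limit — the main obstacle).} Pass to a subsequence with $A_n\to A$ in the Hausdorff metric; $A$ is a nonempty $X$-invariant subset of $M$, hence $A\subset M(X)$. The crux is to show that $A$ has no singularity and that the hyperbolic splittings on the $A_n$ have uniform constants. The danger is that $A_n$ approaches the continuation $\sigma_{Y_n}$ of a singularity $\sigma$ of $M(X)$ with $\dist(A_n,\sigma_{Y_n})\to 0$, in which case $\inf_{A_n}\|Y_n\|\to 0$ and the hyperbolicity of $A_n$ degenerates; this is precisely the step that uses the finer structure of $X$. It is ruled out by the Lorenz-like structure of $\sigma$ inside the transitive sectional-hyperbolic set $M(X)$ (the codimension-one strong stable foliation inside $W^s(\sigma)$ and the accumulation of $W^u(\sigma)\setminus\{\sigma\}$ on $\sigma$): if the attractor $A_n$ meets a sufficiently small neighborhood of $\sigma_{Y_n}$, then following forward an orbit of $A_n$ that passes close to $\sigma_{Y_n}$ — such an orbit is not in $W^s(\sigma_{Y_n})$, and by transitivity of $A_n$ it returns — one sees that $A_n$ must accumulate the whole local unstable manifold of $\sigma_{Y_n}$, hence, being closed, contain $\sigma_{Y_n}$, contradicting that $A_n$ has no singularity. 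Therefore $\liminf_n\dist(A_n,\sigma_{Y_n})>0$ for every singularity, $\inf_{A_n}\|Y_n\|$ is bounded away from $0$, the hyperbolic splittings on the $A_n$ are uniform, and $A$ is a nonsingular hyperbolic set of $X$. The uniform hyperbolicity moreover endows the $A_n$ with isolating blocks of a uniform size, so $A$ inherits an isolating neighborhood and is an attracting set of $X$.

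\textbf{Step 4 (conclusion).} By Step 3, $A$ is a nonempty nonsingular attracting set of $X$, so by Step 1, $A=M(X)$; but then $M(X)$ has no singularity, contradicting the hypothesis that $X$ is a sectional-Anosov flow \emph{with singularities}. This proves Theorem B. The only genuinely delicate point is Step 3 — controlling the Hausdorff limit near the singularities — where the transitivity of $M(X)$ and the Lorenz-like geometry of its singularity (whose three-dimensional version underlies \cite{m}) are essential.
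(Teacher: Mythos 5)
There is a genuine gap, and it sits exactly at the point you label ``the main obstacle'' in Step~3. Your whole argument hinges on the claim that, for a fixed large $n$, if $A_n$ meets a sufficiently small (uniformly sized) neighborhood of $\sigma_{Y_n}$ then $A_n$ must contain $\sigma_{Y_n}$; you justify this with ``following forward an orbit of $A_n$ that passes close to $\sigma_{Y_n}$ \dots one sees that $A_n$ must accumulate the whole local unstable manifold of $\sigma_{Y_n}$.'' That inference is not valid as stated: an orbit that passes near $\sigma_{Y_n}$ but is off $W^s(\sigma_{Y_n})$ only shadows a bounded piece of one unstable leaf for a bounded time, and the inclination lemma would give accumulation on all of $W^u_{loc}(\sigma_{Y_n})$ only if you first knew $A_n\cap W^s(\sigma_{Y_n})\neq\emptyset$ --- which is precisely what has to be proved. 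This is the hard core of the theorem, and it is where the paper spends essentially all of its effort: one needs Theorem \ref{t1} (every singularity of $M(X)$ is Lorenz-like and $M(X)\cap W^{ss}_X(\sigma)=\{\sigma\}$), the singular cross-sections $\Sigma^t,\Sigma^b$ of Definition \ref{n1} with Proposition \ref{Ladilla1} (the maximal invariant sets of nearby fields avoid $\partial^h\Sigma^*$) and Lemma \ref{Ladilla3} (orbits of $A_n$ accumulating on $\sigma$ actually enter $int(\Sigma^{*,\Delta})$ rather than slipping past near $W^{ss}(\sigma)$), and then the minimal-distance argument: take $p\in A^{n_0}\cap\Sigma^{t,\Delta_0}$ minimizing $\dist(\Pi^t(\cdot),0)$, use the attractor property $W^u_{X^{n_0}}(p)\subset A^{n_0}$ and the domination-induced transversality of $T(W^u\cap\Sigma)$ to the fibers of $\Pi^t$ to produce a point of $A^{n_0}$ projecting strictly closer to $0$, forcing $\dist=0$, i.e.\ $A^{n_0}\cap l^t\neq\emptyset\subset W^s_{X^{n_0}}(\sigma)$, hence $\sigma\in A^{n_0}$. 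None of this is carried out (or replaceable by a soft argument) in your proposal; merely invoking ``the Lorenz-like geometry'' does not close it.

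Secondary, less serious issues: your alternative to the paper's Lemma \ref{Ladilla4} --- taking a Hausdorff limit $A$ of the $A_n$, arguing $A$ is a nonsingular attracting set of $X$, and contradicting the fact (your Step~1, which is correct) that $M(X)$ is the unique attracting set of $X$ --- is a reasonable different route for that half, but it too is only sketched: you need uniform hyperbolicity constants on the $A_n$, convergence of the local unstable manifolds to get $W^u_X(x,\epsilon)\subset A$, and the standard (but not free) fact that a compact nonsingular hyperbolic set saturated by local unstable manifolds is an attracting set. The paper avoids this limiting step entirely by proving directly (Lemma \ref{Ladilla4}, via Lemma \ref{l1'} and the transitivity of $X$) that the $A_n$ must accumulate on $\si(X)$, and then obtaining the contradiction from the cross-section argument above. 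So even granting your Step~1 and Step~2 (which matches Lemma \ref{l1}), the proposal as written does not prove the theorem: the decisive step is asserted, not established.
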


The proof follows closely that in \cite{m}, namely, we assume by contradiction that there is a sequence
$X^n$ of vector fields converging to $X$ each one with a non-singular attractor $A^n$.
As in \cite{m} we shall prove both that the sequence $A^n$ accumulates on a singularity $\sigma$ of $X$
and that such accumulation do imply that $A^n$ contains a singularity for $n$ large.
This required to extend some preliminary lemmas in \cite{m},  using a new definition of Lorenz-like 
singularity for sectional Anosov flows and a new definition for singular cross section for 
Lorenz-like singularities to the higher-dimensional case.


\section{Lorenz-like singularities and singular cross-sections in higher dimension}

\noindent
Let $M$ be a compact $n$-manifold, $n \geq 3$.
Fix $X\in {\cal X}^1(M)$, inwardly
transverse to the boundary $\partial M$. We denotes
by $X_t$ the flow of $X$, $t\in I\!\! R$, and $M(X)$ the maximal invariant of $X$.\\

\begin{defi}
\label{hyperbolic}
A compact invariant set $\Lambda$ of $X$ is {\em
hyperbolic}
if there are a continuous tangent bundle
invariant decomposition
$T_{\Lambda}M=E^s\oplus E^X\oplus E^u$ and positive constants
$C,\lambda$ such that

\begin{itemize}
\item $E^X$ is the vector field's
direction over $\Lambda$.
\item $E^s$ is {\em contracting}, i.e.,
$
\mid\mid DX_t(x) \left|_{E^s_x}\right.\mid\mid
\leq Ce^{-\lambda t}$, 
for all $x \in \Lambda$ and $t>0$.
\item $E^u$ is {\em expanding}, i.e.,
$
\mid\mid DX_{-t}(x) \left|_{E^u_x}\right.\mid\mid
\leq Ce^{-\lambda t},
$
for all $x\in \Lambda$ and $t> 0$.
\end{itemize}
A closed orbit is hyperbolic if it is also hyperbolic, as a compact invariant set. An attractor is hyperbolic if it is also a hyperbolic
set. 
\end{defi}

It follows from the stable manifold theory \cite{hps} that if $p$ belongs to a hyperbolic set $\Lambda$, then the following sets

\begin{tabular}{lll}
$W^{ss}_X(p)$ & = & $\{x:d(X_t(x),X_t(p))\to 0, t\to \infty\},$ \\
$W^{uu}_X(p)$ & = & $\{x:d(X_t(x),X_t(p))\to 0, t\to -\infty\},$ \\
\end{tabular}\\

are $C^1$ immersed submanifolds of $M$ which are tangent at $p$ to the subspaces $E^s_p$ and $E^u_p$ of $T_pM$ respectively.
Similarly, the set

\begin{tabular}{lll}
$W^{s}_X(p)$ & = & $ \bigcup_{t\in I\!\! R}W^{ss}_X(X_t(p))$, \\
$W^{u}_X(p)$ & = & $ \bigcup_{t\in I\!\! R}W^{uu}_X(X_t(p))$. \\
\end{tabular}\\

are also $C^1$ immersed submanifolds tangent to $E^s_p\oplus E^X_p$ and $E^X_p\oplus E^u_p$ at $p$ respectively.
Moreover, for every $\epsilon>0$ we have that

\begin{tabular}{lll}
$W^{ss}_X(p,\epsilon)$ & = & $\{x:d(X_t(x),X_t(p))\leq\epsilon, \forall t\geq 0\},$ and, \\
$W^{uu}_X(p,\epsilon)$ & = & $\{x:d(X_t(x),X_t(p))\leq \epsilon, \forall t\leq 0\}$\\
\end{tabular}\\

are closed neighborhoods of $p$ in $W^{ss}_X(p)$ and $W^{uu}_X(p)$ respectively.

There is also a stable manifold theorem in the case when $X$ is sectional-Anosov.
Indeed, denoting by $T_{M(X)}M=E^s_{M(X)}\oplus E^c_{M(X)}$ the corresponding the sectional-hyperbolic
splitting over $M(X)$ we have from \cite{hps} that
the contracting subbundle $E^s_{M(X)}$
can be extended to a contracting subbundle $E^s_U$ in $M$. Moreover, such an extension is tangent to a continuous foliation denoted by $W^{ss}$ (or $W^{ss}_X$ to indicate dependence on $X$).
By adding the flow direction to $W^{ss}$ we obtain a continuous foliation $W^s$ (or $W^s_X$) now tangent to $E^s_M\oplus E^X_M$.
Unlike the Anosov case $W^s$ may have singularities, all of which being
the leaves $W^{ss}(\sigma)$ passing through the singularities $\sigma$ of $X$.
Note that $W^s$ is transverse to $\partial M$
because it contains the flow direction (which is transverse to $\partial M$
by definition).

It turns out that every singularity $\sigma$ of a sectional-Anosov flow $X$ satisfies $W^{ss}_X(\sigma)\subset W^s_X(\sigma)$.
Furthermore, there are two possibilities for such a singularity, namely,
either $dim(W^{ss}_X(\sigma))=dim(W^s_X(\sigma))$ (and so $W^{ss}_X(\sigma)=W^s_X(\sigma)$) or $dim(W^{s}_X(\sigma))=dim(W^{ss}_X(\sigma))+1$.
In the later case we call it Lorenz-like according to the following definition.

\begin{defi}
\label{ll} 
We say that a singularity $\sigma$ of a sectional-Anosov flow $X$ is {\em Lorenz-like}
if
$
dim (W^s(\sigma))
=
dim (W^{ss}(\sigma))+1.
$
\end{defi}

Let $\sigma$ be a singularity Lorenz-like of a sectional-Anosov flow $X$. 
We will denote $dim(W^{ss}_X(\sigma))=s$ and 
$dim(W^{u}_X(\sigma))=u$, 
therefore $\sigma$ has a $(s+1)$-dimensional 
local stable manifold $W^s_X(\sigma)$. 
Moreover $W^{ss}_X(\sigma)$ separates $W^s_{loc}(\sigma)$
in two connected components denoted by $W^{s,t}_{loc}(\sigma)$ 
and $W^{s,b}_{loc}(\sigma)$ respectively.

\begin{defi}
\label{n1}
A {\em singular-cross section} of a Lorenz-like singularity 
$\sigma$ will be a pair of submanifolds $\Sigma^t, \Sigma^b$, where 
$\Sigma^t, \Sigma^b$ are cross sections and;\\

\begin{tabular}{l}
$\Sigma^t$ is transversal to $W^{s,t}_{loc}(\sigma)$. \\
$\Sigma^b$ is transversal to $W^{s,b}_{loc}(\sigma)$.\\ 
\end{tabular}\\

Note that every singular-cross section
contains a pair singular submanifolds $l^t,l^b$ 
defined as the intersection of the local stable manifold of $\sigma$ with $\Sigma^t,\Sigma^b$ 
respectively.
Also note that $dim(l^*)=dim(W^{ss}(\sigma))$.\\

If $*=t,b$ then $\Sigma^*$ is a {\em hipercube of dimension $(n-1)$},   
i.e., diffeomorphic to $B^u[0,1] \times B^{ss}[0,1]$, with 
$B^u[0,1] \approx I^u$, $B^{ss}[0,1] \approx I^s$, $I^k=[-1,1]^k$, $k \in \mathbb{Z}$ and  where:\\

\begin{tabular}{l}
$B^{ss}[0,1]$ is a ball centered at zero and radius $1$ contained in $\re^{dim(W^{ss}(\sigma))}=\re^{s}$\\
$B^{u}[0,1]$ is a ball centered at zero and radius $1$ contained in $\re^{dim(W^{u}(\sigma))}=\re^{n-s-1}$
\end{tabular}\\

Let $f: B^u[0,1] \times B^{ss}[0,1] \longrightarrow \Sigma^*$ be the diffeomorphism, 
where $$f(\left\{0\right\} \times B^{ss}[0,1])=l^*$$ and $\left\{0\right\}=0 \in \re^u$.
Hence, we denoted the boundary of $\Sigma^*$ for $\partial \Sigma^*$,
and $\partial \Sigma^* = \partial^h \Sigma^* \cup \partial^v \Sigma^*$ such that\\

\begin{tabular}{l}
$\partial^h \Sigma^* = \left\{\right.$the union of the boundary submanifolds
 which are transverse to $l^*$ $\left.\right\}$\\
$\partial^v \Sigma^* = \left\{\right.$ the union of the
boundary submanifolds which are parallel to $l^*$ $\left.\right\}.$
\end{tabular}\\

Moreover,
$$\partial^h \Sigma^*  =  (I^u \times [\cup_{j=0}^{s-1} (I^j \times \left\{-1\right\} \times I^{s-j-1})])
\bigcup (I^u \times[\cup_{j=0}^{s-1} (I^j \times \left\{1\right\} \times I^{s-j-1})])$$
$$\partial^v \Sigma^*  =  ([\cup_{j=0}^{u-1} (I^j \times \left\{-1\right\} \times I^{u-j-1})] \times I^s)
\bigcup ([\cup_{j=0}^{u-1} (I^j \times \left\{1\right\} \times I^{u-j-1})] \times I^s)$$
and where $I^0 \times I=I$.\\

\end{defi}

Hereafter we denote $\Sigma^* = B^u[0,1] \times B^{ss}[0,1]$.


\section{Sectional hyperbolic sets in higher dimension}

In this section we use some definitions and results for higher dimension
and we extend some preliminary results for 
transitive sectional-Anosov flows.

An useful property of sectional-hyperbolic sets
is given below.

\begin{lemma}
\label{l1}
Let $X$ be a sectional-Anosov flow,
$X$ a $C^1$ vector field 
in $M$.
If $Y$ is $C^1$ close to $X$,
then every nonempty, compact, non singular, invariant set $H$
of $Y$ is
hyperbolic {\em saddle-type} (i.e. $E^s\neq 0$ and $E^u\neq 0$).
\end{lemma}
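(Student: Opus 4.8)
The plan is to exploit the fact that $H$ is a compact, non-singular, invariant set sitting inside the maximal invariant set $M(Y)$, and that $M(Y)$ inherits a sectional-hyperbolic-type splitting from $X$ because sectional hyperbolicity is an open condition. First I would recall that, since $X$ is sectional-Anosov and $Y$ is $C^1$-close to $X$, the dominated splitting $T_{M(X)}M = E^s \oplus E^c$ persists: by standard perturbation theory for dominated splittings (e.g.\ via the cone-field criterion), $M(Y)$ carries a continuous $DY_t$-invariant splitting $E^s_Y \oplus E^c_Y$ with the same contraction, domination, and sectional-expansion constants up to a small loss, where $\dim E^c_Y = \dim E^c$. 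In particular $E^s_Y$ is non-trivial (it has the same rank as $E^s$), so on $H$ we already have $E^s \neq 0$.

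The heart of the argument is to upgrade the central subbundle $E^c_Y$ restricted to the non-singular set $H$ into a genuine hyperbolic splitting $E^X \oplus E^u$, i.e.\ to show the flow direction $E^X$ splits off and the complementary direction is uniformly expanding. Since $H$ contains no singularities, $X$ (hence $Y$) does not vanish on $H$, so $E^X$ is a well-defined continuous one-dimensional invariant subbundle of $E^c_Y|_H$; because $H$ is compact and $Y$ is bounded away from zero on $H$, $\|DY_t|_{E^X}\|$ grows at most subexponentially (in fact is bounded on bounded time intervals and controlled by the flow-time reparametrization), so $E^X$ carries no exponential expansion or contraction. Now I would invoke the standard mechanism by which sectional expansion on a dominated bundle forces volume expansion on two-planes, and combine it with the domination $E^s \prec E^c_Y$ and the neutrality of $E^X$: on any two-plane inside $E^c_Y|_H$ containing the flow direction, the Jacobian is (uniformly) exponentially large by the sectional-expanding property, while one of the two directions (the flow direction) is neutral, forcing the transverse direction to be uniformly expanding. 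Taking $E^u$ to be the $DY_t$-invariant complement of $E^X$ inside $E^c_Y|_H$ — which exists and is continuous because $H$ admits an invariant splitting obtained by quotienting out the flow direction and applying the dominated-splitting/cone argument once more on the non-singular compact set $H$ — one checks that $\|DY_{-t}|_{E^u}\| \le C e^{-\lambda t}$: this is exactly the statement that $E^u$ is expanding. Hence $T_H M = E^s_Y \oplus E^X \oplus E^u$ is a hyperbolic splitting, and both $E^s_Y$ and $E^u$ are non-zero, so $H$ is hyperbolic of saddle type.

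The main obstacle I anticipate is the extraction of the uniformly expanding subbundle $E^u$ from the merely \emph{sectionally expanding} central bundle $E^c_Y$: sectional expansion by itself only guarantees that two-dimensional areas grow, not that there is a one-codimensional (inside $E^c$) invariant expanding direction. The resolution is precisely that $H$ is non-singular, so the flow direction $E^X$ is a non-trivial continuous invariant line on which there is \emph{no} exponential growth; together with domination of $E^s$ over $E^c$, this pins the flow direction as the ``slow'' direction and lets the sectional (area) expansion be attributed entirely to the complementary directions. Making this rigorous requires (i) showing $E^X$ is uniformly bounded away from the subbundle realizing the area expansion, and (ii) a Gram--Schmidt/volume estimate: for a unit vector $v \in E^u$ and the unit flow vector $e$, $|\det(DY_t|_{\mathrm{span}(e,v)})| \le \|DY_t e\| \cdot \|DY_t v\| \le C' \|DY_t v\|$ since $\|DY_t e\|$ is polynomially bounded, so $\|DY_t v\| \ge (C')^{-1} C^{-1} e^{\lambda t}$, which inverted over negative time gives the expansion bound for $E^u$. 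I would also need to remark that the continuity and invariance of the $E^X \oplus E^u$ refinement of $E^c_Y|_H$ follow from the classical fact (as in \cite{hps}) that a dominated splitting on a compact invariant set which contains a known invariant subbundle can be refined, the flow direction playing the role of the dominated piece inside $E^c_Y$.
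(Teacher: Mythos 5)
Your overall strategy is the right one, and it is essentially the argument the paper delegates to \cite{mpp2} (whose three-dimensional proof is asserted to work in any dimension): the sectional-hyperbolic splitting persists for $Y$ $C^1$-close to $X$ on $M(Y)$, which contains every compact invariant set $H$ of $Y$ and lies in a small neighborhood of $M(X)$ where the cone fields survive, so $E^s\neq 0$ on $H$; the substance of the lemma is then that a compact, nonsingular, invariant subset of a sectional-hyperbolic set is hyperbolic of saddle type. Two steps of your write-up, however, are not yet proofs. First, you work with two-planes of $E^c_Y$ containing the flow direction, i.e. you tacitly assume $Y(x)\in E^c_{Y,x}$ for all $x\in H$; this is true but requires the standard compactness/domination argument (if $Y(x)$ had a nonzero $E^s$-component, domination would force its backward iterates to grow exponentially, contradicting the boundedness of $\|Y\|$ on the compact invariant set $H$), in the spirit of Claim 1 inside the proof of Theorem \ref{t1}.

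Second, and more seriously, your construction of $E^u$ is circular as written: you define $E^u$ as ``the $DY_t$-invariant complement of $E^X$ inside $E^c_Y|_H$'' and only afterwards verify expansion on it, whereas the existence of such an invariant continuous complement is precisely the nontrivial point. Moreover the estimate you propose, $|\det(DY_t|_{\mathrm{span}(e,v)})|\le\|DY_te\|\,\|DY_tv\|$, only bounds $\|DY_tv\|$ from below; since the orthogonal complement of the flow direction inside $E^c_Y$ is not invariant, the statement ``every non-flow vector of $E^c_Y$ grows'' does not by itself yield an invariant expanding subbundle, and this is exactly the obstacle you flag without resolving. The standard repair is to compute the image area as base times height with base $DY_te$ (which is parallel to $Y(Y_t(x))$, hence of bounded norm on $H$): this shows that the component of $DY_tv$ transverse to the flow grows like $e^{\lambda t}$, i.e. the linear Poincar\'e flow over $H$ is uniformly expanding on the projection of $E^c_Y$ and uniformly contracting on the projection of $E^s_Y$, hence hyperbolic; hyperbolicity of $H$ itself, with a genuine invariant $E^u$, then follows from Doering's criterion \cite{d} (or from a graph-transform argument as in \cite{mpp2}). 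With these two points filled in, your proof coincides with the one the paper invokes by citation.
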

\begin{proof}
See (\cite{mpp2}).
The proof in \cite{mpp2} is made in dimension three, but the same proof yields the same conclusion
in any dimension.
\end{proof}

\begin{lemma}
\label{l1'}
Let $X$ be a transitive sectional-Anosov flow
$C^1$ in $M$.
If $O\subset M(X)$ is a periodic orbit
of $X$,
then $O$ is a hyperbolic saddle-type periodic orbit.
In addition, if $p\in O$ then
the set
$$
\{q\in W^{uu}_X(p):
M(X)=\omega_X(q)\}
$$
is dense in $W^{uu}_X(p)$.
\end{lemma}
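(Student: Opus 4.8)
The plan is to combine Lemma \ref{l1} with classical hyperbolic theory (shadowing, closing lemma, spectral decomposition) and the transitivity hypothesis. First I would note that $O\subset M(X)$ is a nonempty, compact, non-singular, invariant set of $X$ itself, so Lemma \ref{l1} (applied with $Y=X$) immediately gives that $O$ is hyperbolic saddle-type; in particular $E^u_x\neq 0$ for $p\in O$, so $W^{uu}_X(p)$ is a nontrivial immersed submanifold. This takes care of the first assertion with essentially no work.

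For the density statement, the strategy is the standard one: since $X$ is transitive, $M(X)=\omega_X(z)$ for some $z\in M(X)$, and one wants to show that a full-measure-like (in fact residual, and certainly dense) subset of the strong unstable manifold $W^{uu}_X(p)$ has $\omega$-limit equal to all of $M(X)$. I would argue as follows. Because $O$ is a hyperbolic periodic orbit inside $M(X)$ and $M(X)$ is sectional-hyperbolic (hence chain transitive when $X$ is transitive), the orbit $O$ lies in the same chain class as every point of $M(X)$. Using an inclination/$\lambda$-lemma together with the connecting structure of $M(X)$, one shows that $W^{uu}_X(p)$ is dense in $M(X)$: given any $q_0\in M(X)$ and $\e>0$, one produces pseudo-orbits from $O$ to $q_0$ and shadows them, the shadowing orbit being trailed by a genuine orbit through a point of $W^{uu}_X(p)$ (this is where one uses that $W^u_X(O)$ accumulates on $M(X)$, a consequence of transitivity and the absence of other basic pieces — compare the three-dimensional arguments in \cite{m}, \cite{mpp2}). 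Then a Baire category argument finishes: enumerate a countable dense set $\{q_k\}$ of $M(X)$ and positive radii $1/m$; the set of $q\in W^{uu}_X(p)$ whose forward orbit enters $B(q_k,1/m)$ is open and dense in $W^{uu}_X(p)$ for each $k,m$; intersecting over all $k,m$ gives a residual, hence dense, set of points $q$ with $\overline{\bigcup_{t\geq 0}X_t(q)}\supset M(X)$. Since $q\in W^{uu}_X(p)\subset M(X)$ and $M(X)$ is closed and invariant, also $\omega_X(q)\subset M(X)$, so $\omega_X(q)=M(X)$ for all such $q$.

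The main obstacle, and the step that needs genuine care rather than citation, is establishing that $W^{uu}_X(p)$ (equivalently $W^u_X(O)$) is dense in $M(X)$ — i.e., that the unstable manifold of the periodic orbit $O$ ``fills up'' the maximal invariant set. In dimension three this follows from results in \cite{m} and the structure of singular-hyperbolic attractors; in higher dimensions I would reduce it to: (i) chain-transitivity of $M(X)$ coming from transitivity of the flow, (ii) the fact from Lemma \ref{l1} that every compact invariant non-singular subset is uniformly hyperbolic, which prevents the existence of other ``trapped'' pieces and lets shadowing run, and (iii) an inclination lemma to transport density along $W^{uu}$. One must be slightly careful near the singularities of $X$, since $M(X)$ is singular; but the singular leaves are the stable leaves $W^{ss}(\sigma)$, which are disjoint from $W^{uu}_X(p)$ (a point in $W^{uu}_X(p)$ has nontrivial unstable direction), so the approximation can always be taken away from the singular set, and the chain-transitive structure still connects $O$ to every regular point and hence, by closedness of $\omega$-limits, to the singularities as well.

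Finally I would double-check the bookkeeping that $\{q\in W^{uu}_X(p): M(X)=\omega_X(q)\}$ really contains a dense (residual) set and not merely that it is nonempty: the openness of ``orbit of $q$ enters a fixed open ball'' uses continuity of the flow, and the density of each such set in $W^{uu}_X(p)$ is exactly the content of the inclination-lemma argument above. With those in hand the proof is complete.
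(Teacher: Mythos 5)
Your first assertion is handled correctly: applying Lemma \ref{l1} with $Y=X$ to the compact, non-singular, invariant set $O$ does give that $O$ is hyperbolic of saddle type, and this is how the matter is settled in the reference \cite{m} that the paper cites for this lemma.

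The density part, however, has two genuine gaps. First, your Baire argument needs, for each ball $B(q_k,1/m)$, that the set of $q\in W^{uu}_X(p)$ whose forward orbit enters that ball is \emph{dense in} $W^{uu}_X(p)$; this does not follow from what your inclination/shadowing paragraph aims to prove, namely that $W^{uu}_X(p)$ is dense in $M(X)$. Density of $W^{uu}_X(p)$ in $M(X)$ says points of the unstable manifold approximate every point of $M(X)$; what you need is that \emph{every} relatively open subdisk $V\subset W^{uu}_X(p)$ contains a point whose forward orbit visits $B(q_k,1/m)$, i.e.\ that the forward iterates of every small unstable subdisk accumulate on $q_k$ --- a strictly stronger statement which you never establish. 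Second, the tool you invoke to connect $O$ to an arbitrary $q_0\in M(X)$ --- pseudo-orbits plus a shadowing lemma on $M(X)$ --- is not available: $M(X)$ is only sectional-hyperbolic, with singularities accumulated by regular orbits, so it is not a hyperbolic set and has no local product structure near the singularities; Lemma \ref{l1} gives hyperbolicity only of compact invariant sets avoiding $Sing(X)$, and a pseudo-orbit from $O$ to $q_0$ may have to pass arbitrarily close to a singularity, so ``taking the approximation away from the singular set'' does not rescue the shadowing step. Both difficulties are avoided by the much shorter argument of \cite{m}, which is the proof the paper relies on: take $z$ with $\omega_X(z)=M(X)$, fix $q'\in W^{uu}_X(p)$ and a relative neighborhood $V$ of $q'$; by domination $T_{q'}W^{uu}_X(p)\oplus\langle X(q')\rangle\oplus E^s_{q'}=T_{q'}M$, so the local center-unstable disk $X_{(-\epsilon,\epsilon)}(V)$ is transverse to the (globally defined) strong stable foliation $W^{ss}$; since the forward orbit of $z$ accumulates on $q'$, for large $t_k$ the leaf $W^{ss}_X(X_{t_k}(z),\epsilon)$ meets $X_{(-\epsilon,\epsilon)}(V)$ in a point $X_s(q)$ with $q\in V$, and then $\omega_X(q)=\omega_X(X_{t_k}(z))=M(X)$. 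This yields density directly, with no chain-transitivity, shadowing, or Baire category; if you want to salvage your outline you must replace the shadowing/Baire machinery by (or reduce it to) this strong-stable projection argument.
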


\begin{proof}
(See \cite{m})
\end{proof}

This following theorem appears in \cite{lec}. First we examine the sectional-hyperbolic 
splitting $T_{M(X)}M = E^s_{M(X)} \oplus E^c_{M(X)}$
of a sectional hyperbolic set $M(X)$ of $X \in {\cal X}^1(M)$.

Lorenz-like singularities
are considered below.

\begin{thm}
\label{t1}
Let $X$ be a transitive sectional-Anosov flow 
$C^1$ for $M$.
Then,
every $\sigma\in Sing(X)\cap M(X)$
is Lorenz-like
and satisfies
$$
M(X)\cap W^{ss}_X(\sigma)=\{\sigma\}.
$$
\end{thm}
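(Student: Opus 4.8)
The plan is to establish the two assertions in turn, the Lorenz-like property first and the triviality of $M(X)\cap W^{ss}_X(\sigma)$ second, since the second will use the first.

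For the Lorenz-like property, I would argue by the dichotomy recalled just before Definition \ref{ll}: every singularity $\sigma\in\si(X)\cap M(X)$ of a sectional-Anosov flow satisfies either $W^{ss}_X(\sigma)=W^s_X(\sigma)$ or $\dim W^s_X(\sigma)=\dim W^{ss}_X(\sigma)+1$, and I must rule out the first case. Suppose $W^{ss}_X(\sigma)=W^s_X(\sigma)$. Then the full stable manifold of $\sigma$ is contained in the strong stable foliation $W^{ss}$, hence is a contracting leaf, and in particular $E^c_\sigma$ contains no stable direction beyond the flow line; combined with $\dim E^c_\sigma\geq 2$ and the sectionally-expanding condition on any two-plane inside $E^c_\sigma$, one gets that $DX_t$ expands area on all of $E^c_\sigma$. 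The key point is that transitivity forces $\sigma$ to be accumulated by regular orbits of $M(X)$ whose $\omega$-limit is all of $M(X)$ (this is the mechanism behind Lemma \ref{l1'}); pushing such an orbit forward it must re-approach $\sigma$ along its stable manifold $W^s_X(\sigma)=W^{ss}_X(\sigma)$, i.e. it must enter $\sigma$ through a purely contracting leaf. But the center bundle along that orbit is forced by domination and by the sectionally-expanding estimate to have a direction transverse to $W^{ss}$ that is volume-expanded, and this is incompatible with the orbit converging to $\sigma$ inside $W^{ss}_X(\sigma)$ as $t\to+\infty$; so the first alternative is impossible and $\sigma$ is Lorenz-like. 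I expect this compatibility argument — showing that a regular orbit of $M(X)$ cannot limit on $\sigma$ through a codimension-$0$-in-$W^s$ strong stable leaf without violating sectional expansion — to be the main obstacle, since it is exactly where the hypothesis ``with singularities, transitive'' is consumed and where the three-dimensional argument of \cite{m} has to be upgraded.

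For the identity $M(X)\cap W^{ss}_X(\sigma)=\{\sigma\}$, I would argue by contradiction: suppose there is $q\in M(X)\cap W^{ss}_X(\sigma)$ with $q\neq\sigma$. By transitivity $M(X)=\omega_X(p)$ for some $p$, so $M(X)$ is chain-transitive and in particular $q$ and $\sigma$ lie in the same transitive set; since $q\in W^{ss}_X(\sigma)$ we have $\omega_X(q)=\{\sigma\}$, hence $X_t(q)\to\sigma$ as $t\to+\infty$ and the whole forward orbit of $q$ lies in $M(X)\cap W^{ss}_X(\sigma)$. Now I use that $\sigma$ is Lorenz-like (just proved): $W^{ss}_X(\sigma)$ is a proper submanifold of $W^s_X(\sigma)$ which it separates into $W^{s,t}_{loc}(\sigma)$ and $W^{s,b}_{loc}(\sigma)$. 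Take a small singular cross-section $(\Sigma^t,\Sigma^b)$ at $\sigma$ as in Definition \ref{n1}; the forward orbit of $q$, staying in $W^{ss}_X(\sigma)$, accumulates on $\sigma$ while meeting neither $\Sigma^t$ nor $\Sigma^b$ transversally in the regular way a generic orbit of $M(X)$ does. On the other hand, the density statement of Lemma \ref{l1'} (applied to a periodic orbit in $M(X)$ accumulating $q$, which exists because $M(X)$ is sectional-hyperbolic and transitive with singularities) produces regular points arbitrarily close to $q$ whose $\omega$-limit is all of $M(X)$; their forward orbits must pass through the singular cross-section and then escape a neighborhood of $\sigma$, so by continuity of the flow and of the strong stable foliation they cannot all be trapped in $W^{ss}_X(\sigma)$, forcing $W^{ss}_X(\sigma)$ to contain an open set of $M(X)$ near $q$ — but $W^{ss}_X(\sigma)$ has codimension $\geq\dim E^c\geq 2$, and $M(X)$, being sectional-hyperbolic with a $2$-dimensional-volume-expanding center, cannot be locally contained in such a thin set. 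This contradiction yields $q=\sigma$.

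A cleaner route to the second assertion, which I would actually write up, is the Hartman–Grobman / no-cycle style argument: near the Lorenz-like $\sigma$ choose linearizing coordinates in which $W^{ss}_X(\sigma)$ is the strong stable coordinate subspace and the remaining stable direction is spanned by $W^{s,t}_{loc}(\sigma)\cup W^{s,b}_{loc}(\sigma)$; the strong stable foliation $W^{ss}$ of $M(X)$ restricted to the section $\Sigma^t$ (resp. $\Sigma^b$) has leaves transverse to $l^t$ (resp. $l^b$), so if $q\in M(X)\cap W^{ss}_X(\sigma)\setminus\{\sigma\}$ then $W^{ss}_X(q)=W^{ss}_X(\sigma)$ and every point of $M(X)$ near $q$ lies on a strong stable leaf hitting $\Sigma^*$; following these leaves forward one sees $M(X)$ near $\sigma$ would be contained in $\bigcup_{t\geq 0}X_t(W^{ss}_X(\sigma))=W^s_X(\sigma)$, contradicting that $M(X)$ is sectional-hyperbolic (its center bundle is sectionally-expanding, hence $M(X)$ cannot sit inside the stable manifold of a singularity, as that would make $E^c$ over those points contracting in area, violating $|\det(DX_t|_{L_x})|\geq C^{-1}e^{\lambda t}$). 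Again the delicate step, and the one deserving most care, is ruling out the degenerate alternative in the Lorenz-like dichotomy using transitivity; once that is in place the containment-in-$W^s(\sigma)$ contradiction with sectional expansion is the essentially the same as the three-dimensional argument in \cite{m}, now carried out with the higher-dimensional singular cross-sections of Definition \ref{n1}.
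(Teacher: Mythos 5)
There is a genuine gap. The paper's entire proof rests on one concrete lemma which your proposal never states or proves: \emph{for every regular point $x\in M(X)$ the flow direction satisfies $X(x)\notin E^s_x$}. The paper proves this by contradiction: if $X(x_0)\in E^s_{x_0}$, then by invariance and continuity of $E^s$ the same holds over the whole orbit and over $\alpha(x_0)$; hence every point of $\alpha(x_0)$ has a singularity as $\omega$-limit, so $\alpha(x_0)$ contains a (saddle-type) singularity $\sigma$, and in either case ($\alpha(x_0)=\{\sigma\}$ or not) one finds a point of $W^u(\sigma)\setminus\{\sigma\}$ whose flow vector lies in $E^s$; normalizing $DX_{-t_n}$ of that vector and passing to a limit produces a unit vector in $T_\sigma W^u(\sigma)\cap E^s_\sigma$, simultaneously expanded and contracted --- a contradiction. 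Once this is available, $M(X)\cap W^{ss}_X(\sigma)=\{\sigma\}$ is immediate, because along $W^{ss}(\sigma)$ one has $T_xW^{ss}(\sigma)=E^s_x$ and $W^{ss}(\sigma)$ is invariant, so any regular point of $M(X)$ on it would have $X(x)\in E^s_x$. Your sketch replaces this mechanism by assertions that are not substantiated and in places do not follow: (a) a transitive orbit accumulating on $\sigma$ does \emph{not} ``re-approach $\sigma$ along its stable manifold'' nor converge to $\sigma$ inside $W^{ss}(\sigma)$ --- its $\omega$-limit is all of $M(X)$, so it merely passes near $\sigma$; to extract anything one must take limits of orbit segments near $\sigma$ and use the local hyperbolic structure to produce a point of $M(X)\cap W^s_{loc}(\sigma)\setminus\{\sigma\}$, which you never do; (b) in your ``cleaner route'' the existence of a single point $q\in M(X)\cap W^{ss}(\sigma)\setminus\{\sigma\}$ does not force $M(X)$ near $\sigma$, or any open piece of $M(X)$, to be contained in $W^s_X(\sigma)$ or in $W^{ss}_X(\sigma)$; that containment is a non sequitur, so the appeal to sectional expansion to contradict it has nothing to bite on.

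There is also a structural problem with the order of deduction. You propose to prove the Lorenz-like property first and then use it for the identity $M(X)\cap W^{ss}_X(\sigma)=\{\sigma\}$, but the natural (and the paper's implicit) dependence is the reverse: the dichotomy $\dim W^s(\sigma)=\dim W^{ss}(\sigma)$ or $\dim W^s(\sigma)=\dim W^{ss}(\sigma)+1$ is a priori, and the degenerate case is excluded by combining (i) the flow-direction lemma above (equivalently $M(X)\cap W^{ss}(\sigma)=\{\sigma\}$) with (ii) the fact that transitivity plus $\sigma\in M(X)$ yields a point of $M(X)$ in $W^s_{loc}(\sigma)\setminus\{\sigma\}$. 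Neither (i) nor (ii) is established in your plan; both would need to be written out (the paper proves (i) in detail and refers to \cite{lec} for the Lorenz-like part), and the vague ``volume-expanded direction transverse to $W^{ss}$'' argument cannot substitute for them as stated.
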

\begin{proof}
  
We make proofs the following claims for the theorem:\\

{\em Claim 1:}\\ 
If $x \in (M(X) \setminus Sing(X))$, then $X(x) \notin E^s_x$.
\begin{proof} 
Suppose by contradiction that there is $x_0 \in (M(X) \setminus Sing(X))$ such that 
$X(x_0)\in E^s_{x_0}$ . Then, $X(x) \in E^s_x$ 
for every $x$ in the orbit of $x_0$ since $E^s_{M(X)}$
is invariant. So $X(x)\in E^s_x$ 
for every $x \in \alpha(x_0)$ by continuity. It follows that $\omega(x)$ 
is a singularity for all
$x \in \alpha(x_0)$. In particular, $\alpha(x_0)$ contains a singularity 
$\sigma$ which is necessary saddle-type.
Now we have two cases: $\alpha(x_0)=\{\sigma\}$ or not. If $\alpha(x_0)=\{\sigma\}$ 
then $x_0 \in W^u(\sigma)$.
For all $t \in \mathbb{R}$ define the unitary vector
$$v^t = \frac{DX_t (x_0)(X(x_0))}{ || DX_t (x_0)(X(x_0)) ||}.$$
It follows that 
$$v^t \in T_{X_t (x_0)}W^u(\sigma)\cap	E^s_{X_t(x_0)}, \quad \forall t \in \mathbb{R}$$.
Take a sequence $t_n \rightarrow \infty$ such that the sequence $v^{-tn}$ 
converges to $v^{\infty}$ (say). Clearly
$v^{\infty}$ is an unitary vector and, since $X_{-t_n} (x_0)\rightarrow  \sigma$ 
and $E^s$ is continuous we obtain
$$v^{\infty} \in T_{\sigma}W^u(\sigma)\cap E^s_{\sigma}$$.

Therefore $v^{\infty}$ is an unitary vector which is simultaneously expanded and contracted
by $DX_t(\sigma)$ a contradiction. This contradiction shows the result when $\alpha(x_0) = \{\sigma\}$.
If $\alpha(x_0) \neq \{\sigma\}$ then 
$(W^u(\sigma) \setminus \{\sigma\}) \cap \alpha(x_0) \neq \emptyset$. 
Pick $x_1 \in (W^u(\sigma)\setminus \{\sigma\})\cap \alpha(x_0)$.
Clearly $X(x_1) \in E^s_{x_1}$ 
and then we get a contradiction as in the first case replacing $x_0$
by $x_1$. This contradiction proves the lemma in the second case.
\end{proof}

{\em Claim 2:}\\
If $\sigma \in M(X) \cap Sing(X)$, then $M(X) \cap W^{ss}(\sigma) = \{\sigma\}$.

\begin{proof} 
Notice that $E^s_x = T_xW^{ss}(\sigma)$ for all $x \in W^{ss}(\sigma)$. 
Moreover, $W_{ss}(\sigma)$ is an invariant
manifold so $X(x) \in T_xW^{ss}(\sigma)$ for all $x \in W^{ss}(\sigma)$.
We conclude that $X(x) \in E^s_x$
for all $x \in W^{ss}(\sigma)$ and now Claim (1) applies.
\end{proof}

\end{proof}

This theorem implies the following
two useful properties.

\begin{prop}
\label{Ladilla1}
Let $X$ be a transitive sectional-Anosov flow 
$C^1$ of $M$.
Let $\sigma$ be a singularity of $X$ in $M(X)$
(so $\sigma$ is Lorenz-like by Theorem
\ref{t1}).
Then, there is a
singular-cross section
$\Sigma^t,\Sigma^b$ of $\sigma$ in $M$
such that
$$
\left(M(Y)\right)\cap (\partial^h\Sigma^t\cup\partial^h\Sigma^b)=
\emptyset,
$$
for every $C^r$ vector field $Y$
close to $X$.
\end{prop}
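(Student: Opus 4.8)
The plan is to construct the singular cross-sections $\Sigma^t,\Sigma^b$ carefully near $\sigma$ so that their horizontal boundaries $\partial^h\Sigma^t,\partial^h\Sigma^b$ lie at a definite distance from the local stable manifold $W^s_{\mathrm{loc}}(\sigma)$, and then to exploit the fact (Theorem~\ref{t1}) that $M(X)\cap W^{ss}_X(\sigma)=\{\sigma\}$ together with the hyperbolicity of $\sigma$ to show that no point of $M(X)$ — hence of $M(Y)$ for $Y$ close to $X$ — can reach those horizontal boundaries. More precisely, recall that $\partial^h\Sigma^*$ is by definition the part of $\partial\Sigma^*$ transverse to the singular submanifold $l^*=W^s_{\mathrm{loc}}(\sigma)\cap\Sigma^*$; geometrically $\partial^h\Sigma^*$ is the ``far'' boundary in the $W^{ss}$-direction and is uniformly separated from $l^*$ inside $\Sigma^*$. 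Since $l^*\subset W^{ss}_X(\sigma)$ (after adjusting by the flow), a point $q\in M(X)$ lying on $\partial^h\Sigma^*$ would have, under the local flow near $\sigma$, its forward (or backward) orbit shadowing the contracting directions and accumulating on $W^{ss}_X(\sigma)\cap M(X)=\{\sigma\}$ from a position transverse to $W^{ss}$, which contradicts the local product structure given by the hyperbolic splitting at $\sigma$.

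The key steps, in order, are the following. First I would fix a small linearizing neighborhood $U_\sigma$ of $\sigma$ (Hartman--Grobman, or at least adapted $C^1$ coordinates in which $E^s,E^X,E^u$ and the Lorenz-like splitting of $W^s_{\mathrm{loc}}$ are coordinate subspaces) and choose the hypercubes $\Sigma^t,\Sigma^b\subset U_\sigma$ transverse to $W^{s,t}_{\mathrm{loc}}(\sigma)$ and $W^{s,b}_{\mathrm{loc}}(\sigma)$ respectively, as in Definition~\ref{n1}, with $l^*=\{0\}\times B^{ss}[0,1]$ the intersection with the local stable manifold. Second, I would observe that $\partial^h\Sigma^*$ is exactly $I^u\times\partial(I^s)$ in these coordinates, i.e.\ the set of points of $\Sigma^*$ whose $B^{ss}$-coordinate has norm equal to $1$; in particular $\partial^h\Sigma^*\subset W^s_{\mathrm{loc}}(\sigma)$-translates that are a uniform distance $\geq 1$ (in the $W^{ss}$-coordinate) away from $l^*$, while still lying on the stable side, so they flow toward $\sigma$ in forward time staying inside the cube's ``stable cylinder.'' Third — and this is where Theorem~\ref{t1} enters — any $q\in M(X)\cap\partial^h\Sigma^*$ has $\omega_X(q)$ approaching $\sigma$ along $W^{ss}$, and by invariance and closedness of $M(X)$ the limit point is in $M(X)\cap W^{ss}_X(\sigma)=\{\sigma\}$; but then $q\in W^{ss}_X(\sigma)$, forcing its $B^{ss}$-coordinate-$1$ location to actually sit inside $W^{ss}_X(\sigma)\cap\Sigma^*$, which by Theorem~\ref{t1} and transversality is empty except at the single point $\sigma\notin\partial^h\Sigma^*$ — a contradiction. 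Hence $M(X)\cap(\partial^h\Sigma^t\cup\partial^h\Sigma^b)=\emptyset$. Fourth, I would upgrade this to $Y$ close to $X$: since $\partial^h\Sigma^t\cup\partial^h\Sigma^b$ is compact and disjoint from the compact set $M(X)$, and since $Y\mapsto M(Y)$ is upper semicontinuous (indeed $M(Y)$ is contained in any fixed neighborhood of $M(X)$ once $Y$ is $C^1$-close, because $M$ is an isolating block with $X$ — and $Y$ — inwardly transverse to $\partial M$), we get $M(Y)\cap(\partial^h\Sigma^t\cup\partial^h\Sigma^b)=\emptyset$ for $Y$ in a $C^1$-neighborhood of $X$. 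Note the singular cross-sections themselves are built once and for all for $X$; only the maximal invariant set moves, and it moves only slightly.

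The main obstacle I anticipate is making rigorous the claim that $\partial^h\Sigma^*$ can be chosen uniformly separated from $W^{ss}_X(\sigma)$ while $\Sigma^*$ is still a genuine (small) cross-section transverse to the appropriate component of $W^s_{\mathrm{loc}}(\sigma)$: one must verify that the Lorenz-like condition $\dim W^s(\sigma)=\dim W^{ss}(\sigma)+1$ really does allow a hypercube whose $l^*$ coincides with a $W^{ss}$-slice and whose horizontal boundary escapes $W^{ss}_X(\sigma)$, rather than being forced to intersect it. This amounts to checking the local geometry of $W^{ss}_X(\sigma)$ versus $W^s_{\mathrm{loc}}(\sigma)$ inside the chosen chart and confirming that $W^{ss}_X(\sigma)$, being codimension one inside $W^s_{\mathrm{loc}}(\sigma)$ and separating it into $W^{s,t}_{\mathrm{loc}}$ and $W^{s,b}_{\mathrm{loc}}$, meets $\Sigma^*$ only in $l^*$; the rest of the argument is then a routine application of Theorem~\ref{t1} and upper semicontinuity of the maximal invariant set. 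A secondary technical point is ensuring the flow-box / shadowing argument in step three is valid in the $C^1$ (non-linearizable) setting — here one can replace the linearization by the existence of the strong stable foliation $W^{ss}_X$ tangent to the extended bundle $E^s_U$, guaranteed in the excerpt, and argue directly with that foliation instead of with linear coordinates.
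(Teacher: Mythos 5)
Your steps two and three---the heart of the argument---do not work. A point $q\in\partial^h\Sigma^*$ has extreme $B^{ss}$-coordinate but \emph{arbitrary} $B^u$-coordinate; except for the ends of $l^*$ it does not lie in $W^s_{loc}(\sigma)$ at all, so it neither ``flows toward $\sigma$ in forward time'' nor has $\omega_X(q)$ ``approaching $\sigma$ along $W^{ss}$'': it simply escapes along the unstable direction after at most one passage near $\sigma$. Even for the points of $\partial^h\Sigma^*$ that do tend to $\sigma$ (the ends of $l^*$, which lie in $W^s_{loc}(\sigma)$), your conclusion $q\in W^{ss}_X(\sigma)$ is a non sequitur: convergence to $\sigma$ only gives $q\in W^s_X(\sigma)$, and for a Lorenz-like singularity $W^s_X(\sigma)$ is strictly larger than $W^{ss}_X(\sigma)$, while Theorem \ref{t1} controls only $M(X)\cap W^{ss}_X(\sigma)$ and says nothing about $M(X)\cap W^s_X(\sigma)$, which is typically large. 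Your parenthetical ``since $l^*\subset W^{ss}_X(\sigma)$ (after adjusting by the flow)'' is the same confusion: $l^*\subset W^s_X(\sigma)$, and its flow translates accumulate on $\sigma$ without ever entering $W^{ss}_X(\sigma)$. Note also that $M(X)\cap\partial^h\Sigma^*=\emptyset$ cannot hold for an arbitrary hypercube as in Definition \ref{n1} (already for the geometric Lorenz attractor, a section whose $B^{ss}$-radius is too small has its horizontal boundary cutting through the attractor); the proposition is existential, and the entire content lies in a choice of section that your step one never makes.

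The proof the paper relies on (it defers to \cite{m}; the argument is dimension-free) is a proximity argument, not a dynamical one. In adapted coordinates near $\sigma$, keep the strong-stable radius of the section \emph{fixed} and shrink both its unstable radius and its distance to $\sigma$ in the weak-stable direction. Then $\partial^h\Sigma^{t}\cup\partial^h\Sigma^{b}$ converges, in the Hausdorff sense, to a compact subset of $W^{ss}_X(\sigma)\setminus\{\sigma\}$ (a sphere of fixed radius inside the strong stable manifold). By Theorem \ref{t1} this sphere is disjoint from the compact set $M(X)$, so for a sufficiently small section the horizontal boundary lies in an open set disjoint from $M(X)$; this is where the Lorenz-like structure and Theorem \ref{t1} actually enter. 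Your step four is correct and is exactly how one finishes: $\partial^h\Sigma^t\cup\partial^h\Sigma^b$ is compact, and $Y\mapsto M(Y)$ is upper semicontinuous because $M$ is a compact manifold into which every $C^1$-nearby $Y$ is still inwardly transverse, so $M(Y)$ remains in a small neighborhood of $M(X)$ and still misses the horizontal boundaries. If you replace your steps two and three by this choice of section plus the compactness argument, the proof is complete.
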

\begin{proof}
See (\cite{m}).
\end{proof}

\bigskip

Let $\sigma$ be a Lorenz-like singularity
of a $C^1$ vector field
$X$ in ${\cal X}^1(M)$, and $\Sigma^t,\Sigma^b$ be a singular-cross
section of $\sigma$. Thus for $\sigma$ we denote,
\begin{equation}
\begin{tabular}{l}
$dim(W^{ss}_X(\sigma))=s$, then \\
$dim(W^s_X(\sigma))=s+1$ and $dim(W^u_X(\sigma))=n-s-1$, \\ 
$dim(\Sigma^*)=s+(n-s-1)=n-1$.\\
\end{tabular}\\
\label{p1}
\end{equation}                     

We remember that $\Sigma^*= B^u[0,1] \times B^{ss}[0,1]$, 
then we will set up a family of singular cross-sections as follows:
Given $0<\Delta\leq 1$ small, we define $\Sigma^{*,\Delta}=B^u[0,\Delta] \times B^{ss}[0,1]$,
such that
$$l^* \subset \Sigma^{*,\Delta}\subset \Sigma^* \,\,\, i.e$$
$$(l^*=\{0\}\times B^{ss}[0,1]) \subset
(\Sigma^{*,\Delta}=B^u[0,\Delta] \times B^{ss}[0,1]) \subset 
(\Sigma^*= B^u[0,1] \times B^{ss}[0,1])$$
where fix a coordinate system $(x^*,y^*)$
in $\Sigma^*$ and $*=t,b$. We will assume that $\Sigma^*=\Sigma^{*,1}$.

We will be use this notation in
the next lemma and in the next section for the Theorem \ref{thB}. Thus,

\begin{lemma}
\label{Ladilla3}
Let $X$ be a transitive sectional-Anosov flow
$C^1$ of $M$. Let $\sigma$ be a
singularity of $X$ in $M(X)$.
Let $Y^n$ be a sequence of vector fields
converging to $X$ in the $C^1$ topology.
Let $O_n$ be a periodic
orbit of $Y^n$ such that the
sequence $\{O_n:n\in I\!\! N\}$ accumulates on $\sigma$.
If $0<\Delta\leq 1$ and
$\Sigma^t,\Sigma^b$ is a singular-cross section
of $\sigma$, then there is
$n$ such that
either
$$
O_n\cap int(\Sigma^{t,\Delta})\neq\emptyset\,\,\,\,
\mbox{or}
\,\,\,\,O_n\cap int(\Sigma^{b,\Delta})\neq\emptyset.
$$
\end{lemma}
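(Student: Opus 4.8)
The plan is to argue by contradiction: suppose that for all $n$ we have $O_n\cap \interior(\Sigma^{t,\Delta})=\emptyset$ and $O_n\cap \interior(\Sigma^{b,\Delta})=\emptyset$. Since $\{O_n\}$ accumulates on $\sigma$, for $n$ large $O_n$ enters an arbitrarily small neighborhood $U$ of $\sigma$, and there (by continuity of the flow and the fact that $Y^n\to X$ in $C^1$) the local dynamics of $Y^n$ is a small perturbation of the linear saddle dynamics of $X$ at $\sigma$. Because $\sigma$ is Lorenz-like, the local stable manifold $W^s_{loc}(\sigma)$ is $(s+1)$-dimensional and $W^{ss}_{loc}(\sigma)$ separates it into the two components $W^{s,t}_{loc}(\sigma)$, $W^{s,b}_{loc}(\sigma)$; the singular cross-sections $\Sigma^t,\Sigma^b$ are transverse to these components, and $\Sigma^{*,\Delta}$ is the sub-cross-section obtained by shrinking only the $B^u$-factor. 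I would first use Theorem~\ref{t1}, namely $M(X)\cap W^{ss}_X(\sigma)=\{\sigma\}$, together with Proposition~\ref{Ladilla1}, to ensure that the cross-sections can be chosen with $M(Y)\cap(\partial^h\Sigma^t\cup\partial^h\Sigma^b)=\emptyset$ for $Y$ near $X$; this controls how orbits can exit the flow-box around $\sigma$.

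The core of the argument is a local analysis in a flow-box neighborhood $U$ of $\sigma$. I would fix a small $\Delta$ and a small $U$ so that: (i) the entrance set of $U$ is contained (up to the flow direction) in a cross-section transverse to $W^s_{loc}(\sigma)$; (ii) every orbit segment of $Y^n$ that enters $U$ close enough to $W^{s,t}_{loc}(\sigma)$ (resp. $W^{s,b}_{loc}(\sigma)$) but not on $W^{ss}$ must, while passing through $U$, cross $\Sigma^{t,\Delta}$ (resp. $\Sigma^{b,\Delta}$) in its interior before leaving $U$; this is the higher-dimensional analogue of the standard Lorenz-coordinate picture, and it uses that the unstable direction $E^u$ is $(n-s-1)$-dimensional and expanding, so points drift away from $W^s_{loc}$ along directions that sweep across the $B^u[0,\Delta]$ factor. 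Since $O_n$ accumulates on $\sigma$, for $n$ large $O_n$ must re-enter $U$; as $O_n$ is compact and invariant and, by Lemma~\ref{l1}, hyperbolic saddle-type for $Y^n$, it cannot be entirely contained in an arbitrarily small neighborhood of the singularity $\sigma$ (it would otherwise have to accumulate on $\sigma$ as an invariant set, contradicting that $\sigma$ is hyperbolic and isolated in $M(Y^n)\cup\{$nearby invariant sets$\}$). Hence $O_n$ both enters and exits $U$, and the exit forces a transverse passage through $\interior(\Sigma^{t,\Delta})$ or $\interior(\Sigma^{b,\Delta})$, contradicting the assumption.

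Concretely, the steps in order would be: (1) choose $\Sigma^t,\Sigma^b$ via Proposition~\ref{Ladilla1} and set up the flow-box $U$ around $\sigma$ with adapted coordinates splitting $T_\sigma M = E^s_\sigma \oplus E^X_\sigma \oplus E^u_\sigma$; (2) using $C^1$-closeness of $Y^n$ to $X$, transfer the stable/unstable-manifold structure and the cone estimates to $Y^n$ inside $U$, so that $Y^n$ also has a Lorenz-like local picture there for $n$ large; (3) show that any orbit arc of $Y^n$ entering $U$ and not lying on the (thin) local strong-stable leaf must cross $\interior(\Sigma^{t,\Delta})$ or $\interior(\Sigma^{b,\Delta})$ before exiting through the lateral boundary of $U$ — here one uses that $\partial^h\Sigma^*$ avoids the maximal invariant set, so exit cannot happen "through the top/bottom" ; (4) invoke accumulation of $O_n$ on $\sigma$ and hyperbolicity of $O_n$ (Lemma~\ref{l1}) to conclude that $O_n$ genuinely passes through $U$ in this way for $n$ large, producing the desired intersection.

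The main obstacle I anticipate is Step~(3): making precise, in dimension $n$ with a multi-dimensional unstable bundle, the claim that "passing near $W^{s,*}_{loc}(\sigma)$ forces a crossing of the shrunken cross-section $\Sigma^{*,\Delta}$." In the three-dimensional case of \cite{m} this is an essentially planar, elementary computation with the Lorenz normal form; in higher dimensions one must argue that, although the intersection of the orbit with a full cross-section $\Sigma^*$ might a priori land in the annular region $\Sigma^*\setminus\Sigma^{*,\Delta}$, the expansion of $DY^n_t$ along $E^u$ combined with domination by $E^s$ forces the orbit's passage to actually sweep through the central slab $B^u[0,\Delta]\times B^{ss}[0,1]$ — this requires careful bookkeeping of how the flow-box time and the $\Delta$-size are chosen relative to the hyperbolicity constants $C,\lambda$, and relies on the sectionally-expanding condition to prevent the orbit from "skirting around" the singular submanifold $l^*$.
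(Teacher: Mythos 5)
There is a genuine gap, and it sits exactly where you anticipate it: your Step (3) is not just technically delicate in higher dimensions, it is false as a purely local statement. An orbit of $Y^n$ can approach $\sigma$ through a small neighborhood of the strong-stable manifold $W^{ss}_{loc}(\sigma)$ (weak-stable coordinate essentially zero), pass by $\sigma$, and leave the flow box along the unstable cone without ever meeting $\Sigma^{t,\Delta}\cup\Sigma^{b,\Delta}$: the sections $\Sigma^t,\Sigma^b$ are transverse only to the two components $W^{s,t}_{loc}(\sigma)$, $W^{s,b}_{loc}(\sigma)$ of $W^s_{loc}(\sigma)\setminus W^{ss}_{loc}(\sigma)$, so a pass that "skirts" $l^t\cup l^b$ by hugging $W^{ss}$ is compatible with all the cone and expansion estimates, for every choice of $\Delta$ relative to the hyperbolicity constants. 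Saying the orbit is "not on $W^{ss}$" does not help: for each fixed $n$ nothing local forbids it from being arbitrarily close to $W^{ss}$ during that pass, and the expansion along $E^u$ controls the unstable coordinates, not the direction of approach inside (a neighborhood of) $W^s_{loc}(\sigma)$. Your invocation of Theorem \ref{t1} only through Proposition \ref{Ladilla1} (to keep $\partial^h\Sigma^*$ away from the maximal invariant set) does not address this scenario, and the remark that $O_n$ cannot be contained in a small neighborhood of $\sigma$ is peripheral.

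The missing idea is global–asymptotic rather than local: one must rule out the "near $W^{ss}$" passes by a limiting argument using Theorem \ref{t1} together with the upper semicontinuity of $Y\mapsto M(Y)$. Concretely, the paper fixes a fundamental domain $D_{\e}=S_{\e}\cup S_{-\e}\cup C_{\e}$ of the flow restricted to $W^s_{loc}(\sigma)$, thickens it to a cross-section $\tilde D_{\e}$ with coordinates $(x,s)$, and realizes $\Sigma^{t,\Delta},\Sigma^{b,\Delta}$ as the parts of $\tilde D_{\e}$ lying over $S_{\Delta},S_{-\Delta}$. Since $O_n$ accumulates on $\sigma$, it must cut $\tilde D_{\e}$; if it always misses $int(\Sigma^{t,\Delta})\cup int(\Sigma^{b,\Delta})$, the cutting points $p_n=(x_n,s_n)$ have $x_n$ in the equatorial band $C_{\Delta}$ and $s_n\to 0$, hence (after a subsequence) converge to a point $z\in C_{\e}|_{x_{s+1}=0}\subset W^{ss}_X(\sigma)\setminus\{\sigma\}$. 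Because $O_n\subset M(Y^n)$ and $M(Y^n)$ lies in arbitrarily small neighborhoods of $M(X)$ as $Y^n\to X$, the limit $z$ belongs to $M(X)$, contradicting $M(X)\cap W^{ss}_X(\sigma)=\{\sigma\}$ from Theorem \ref{t1}. Without this mechanism (or an equivalent substitute), your proposed proof cannot be completed, since the obstruction you flag is not a matter of bookkeeping of $\Delta$ against $C,\lambda$ but requires excluding accumulation of the orbits $O_n$ on $W^{ss}_X(\sigma)\setminus\{\sigma\}$.
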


\begin{proof}
Since $O_n$ accumulates on $\sigma\in M(X)$
and $M(X)$ is maximal invariant, 
we have that $O_n\subset M(X)$ for all $n$ large
(recall $Y^n\to X$ as $n\to \infty$).
Let us fix a fundamental domain
$D_{\e}$ of the vector field's flow $X_t$ restricted
to the local stable manifold
$W^s_{loc}(\sigma)$ (\cite{dmp}) for $\e>0$ as follows:\\

\begin{tabular}{lll}
$D_{\e}$  & = & $S_{\e} \cup S_{-\e} \cup C_{\e}$, where: \\
$S_{\e}$  & = & $\{ x \in \re^{s+1} | \qquad \Sigma_{i=1}^{s} x_i^2 + (x_{s+1}-\e)^2 = 1,\qquad \wedge \qquad x_{s+1} \geq \e \}$ \\
$S_{-\e}$ & = & $\{ x \in \re^{s+1} | \qquad \Sigma_{i=1}^{s} x_i^2 + (x_{s+1}-\e)^2 = 1,\qquad \wedge \qquad x_{s+1} \leq -\e \}$ \\
$C_{\e}$  & = & $\{ x \in \re^{s+1} | \qquad \Sigma_{i=1}^{s} x_i^2 = 1, \qquad \wedge \qquad x_{s+1} \in [-\e,\e] \}$ 
\end{tabular}\\

\begin{figure}[htv] 
\begin{center}
\input{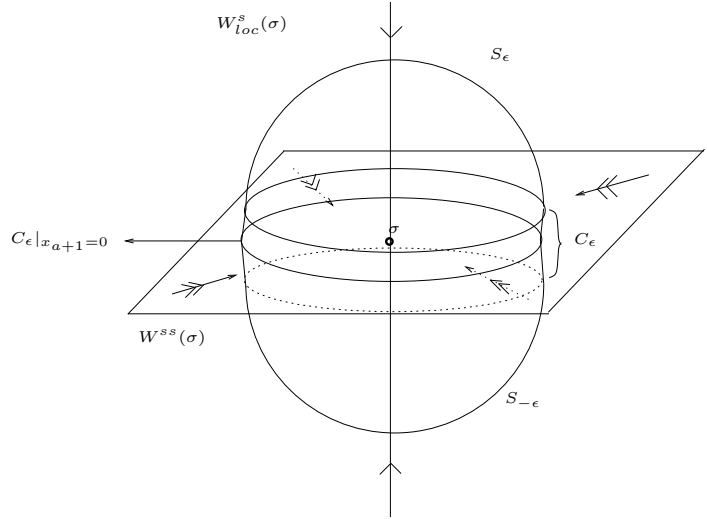} 
\caption{\label{f.2} The fundamental domain.}
\end{center}
\end{figure}

Since $W^s_{loc}(\sigma)$ is $(s+1)$-dimensional
and $D_{\e}$ is homeomorphic to the sphere $(s+1)$-dimensional, 
one has that for construction $D_{\e}$ intersects
$W^{ss}_X(\sigma)$ in $C_{\e}|_{x_{s+1}=0}$ that is a sphere $(s-1)$-dimensional.
Note that the orbits of all point in $C_{\e}|_{x_{s+1}=0}$ together with
$\sigma$ yields $W^{ss}_X(\sigma)$.
In particular, $C_{\e}|_{x_{s+1}=0} \notin M(X)$ by Theorem
\ref{t1}. Also note that forall $\e$, $D_{\e}$ is a fundamental domain.

Let $\tilde{D_{\e}}$ be a cross section
of $X$ such that
$W^s_{loc}(\sigma)\cap \tilde{D_{\e}}=D_{\e}$.
It follows that $\tilde{D_{\e}}$ is a $(s+2)$-cylinder, and so,
we can put a system coordinated
$(x,s)$ with $x \in D_{\e}$ and
$s\in [-1,1]$ say, and so we can construct a family of singular-cross
sections $\Sigma^t_\delta,\Sigma^b_\delta$
(for all $\delta\in [-\e,\e]$)
by setting
$$\Sigma^t_\delta=\{(x,s)\in \tilde{D_{\e}}: x \in S_{\de} , s \in [-1,1]\}$$
and
$$\Sigma^b_\delta=\{(x,s)\in \tilde{D_{\e}}: x \in S_{-\de} , s \in [-1,1]\}$$

Due to the smooth variation
of $W_Y^{ss}(\sigma(Y))$ with respect to
$Y$ close to $X$ we can assume that
$\sigma(Y)=\sigma$ and that
$W^{ss}_{loc,Y}(\sigma(Y)
=W^{ss}_{loc}(\sigma)$
for every $Y$ close to $X$.
By choosing $D_{\e}$ so close to $\sigma$ we can further assume that
$\tilde{D_{\e}}$ is a cross-section
of $Y$, for every $Y$ close to $X$.
We claim that there is $\delta>0$ such that
the conclusion of the lemma
holds for
$\Sigma^t=\Sigma^t_\delta$ and $\Sigma^b=\Sigma^b_\delta$.
Indeed, we first note that under the cylindrical coordinate
system $(x,s)$ one has $\Sigma^{*,\Delta}=\Sigma^*_\Delta$ for all
$0<\Delta\leq \delta$ (where $*=t,b$). 
If the conclusion of the claim fails, 
implies that $O_n$ intersects
$\tilde{D_{\e}}\setminus
(\Sigma^t_\Delta\cup \Sigma^b_\Delta)$ for all $\Delta>0$ small.
In other words,
there would exist $p_n\in O_n$ (for all $n$ large)
such that
$p_n=(x_n,s_n)$ with $x_n\in C_{\Delta}$ and $s_n\to 0$ as $n\to\infty$.
Since $\Delta$ is arbitrary and $s_n\to 0$ we conclude
that $p_n$ converges to a point in $C_{\Delta}|_{x_{s+1}=0}$ 
by passing to a subsequence if necessary,
since if $s_n\to 0$, it implies that the intersection 
tends to $(s+1)$-dimensional sphere $D_{\e}$.

As $O_n\subset M(Y^n)$, $Y^n\to X$ and $M(Y^n)$ is $\epsilon C^1$-close to
$M(X)$ for all $n$ $(n\in \mathbb{N})$, we have that the last would imply
that exists a point $z \in (C_{\e}|_{x_{s+1}=0})$ such that $ z \in M(X)$.
This contradicts
Theorem \ref{t1} and the proof follows.
\end{proof}
\bigskip


\section{Proof of Theorem \ref{thB}}

We prove the theorem by contradiction.
Let $X$ be a transitive sectional-Anosov flow
$C^1$ of $M$.
Then, we suppose that
there is a sequence $X^n\overset{C^1}{\to} X$ such that every $X^n$ exhibits
a non-singular attractor $A^n \in M(X^n)$ arbitrarily close to $M(X)$ 
and since $A^n$ also is arbitrarily close to $M(X)$, 
we can assume that
$A^n$ belongs to $M(X)$ for all $n$.
Also, since $A^n$ is an attractor, 
we have that $A^n$ is compact, invariant and nonempty, 
and by hypothesis $A^n$ is non-singular, then 
the lemmas \ref{l1} and \ref{l1'} imply
the following:
\begin{equation}
\begin{tabular}{l}
$A^n$ is a hyperbolic attractor of type saddle
of $X^n$ for all $n$,\\ 
and since $A^n$ is non-singular for all $n$, obviously $A^n$ is not\\ 
a singularity of $X^n$ for all $n$.
\end{tabular}
\label{e1}
\end{equation}
\bigskip

We denote by:\\ 
\begin{tabular}{l}
$Sing(X)$ the set of singularities of $X$.\\
$Cl(A)$ the closure of $A$, $A\subset M$.\\
If $\delta>0$, $B_\delta(A)=\{x\in M:d(x,A)<\delta\}$, 
where $d(\cdot,\cdot)$ is the metric in $M$.\\
\end{tabular}\\

As in \cite{m}, we need the following lemma in
the higher dimension case.

\begin{lemma}
\label{Ladilla4}
The attractors $A^n$ accumulate 
on $Sing(X)$, i.e.
$$Sing(X)\bigcap Cl\left(\bigcup_{n\in I\!\! N}A^n\right)\neq\emptyset.$$
\end{lemma}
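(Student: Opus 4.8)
The plan is to argue by contradiction: suppose that $\closure\left(\bigcup_{n} A^n\right)$ contains no singularity of $X$. Since each $A^n \subset M(X)$ and $M(X)$ is compact, the set $K = \closure\left(\bigcup_n A^n\right)$ is a compact subset of $M(X)$ which, by assumption, is disjoint from $\si(X)$. Hence $K$ is a compact, non-singular subset of the sectional-hyperbolic set $M(X)$, and more relevantly each $A^n$ is a compact non-singular invariant set of $X^n$ with $X^n \to X$; by Lemma \ref{l1} every $A^n$ is hyperbolic of saddle-type, which we have already recorded in \eqref{e1}. The first step is therefore to extract structural consequences from transitivity of $X$: because $X$ is a transitive sectional-Anosov flow \emph{with} singularities, $M(X)$ itself is not hyperbolic (it contains a singularity accumulated by regular orbits, by Theorem \ref{t1} the singularity is Lorenz-like and lies in $M(X)$), so $M(X)\neq A^n$ for any $n$; the attractors $A^n$ are proper hyperbolic subsets.

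Next I would use the transitivity of $X$ together with Lemma \ref{l1'}: fix a singularity $\sigma \in \si(X)\cap M(X)$, which exists by hypothesis and is Lorenz-like by Theorem \ref{t1}. Transitivity gives a point $p$ with $\omega_X(p) = M(X)$; in particular $\sigma \in \omega_X(p)$, so the orbit of $p$ accumulates on $\sigma$. The idea is to approximate: since $A^n$ is a hyperbolic attractor with isolating block $U_n$, and since $A^n$ is $C^1$-close to lying inside $M(X)$, one shows that the maximal invariant set $M(X)$ — which is chain-transitive / transitive — cannot be "trapped" away from the basins of the $A^n$ unless the $A^n$ themselves come close to $\sigma$. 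Concretely, I would choose a small isolating block $U$ of $M(X)$ in $M$; for $n$ large $A^n \subset U$ and in fact the whole maximal invariant $M(X^n)$ is close to $M(X)$. Using that $M(X)$ is transitive with a dense orbit passing near $\sigma$, and that $A^n$ is an \emph{attractor} of $X^n$ (so its basin is open and $A^n = \bigcap_{t>0} X^n_t(V_n)$ for a neighbourhood $V_n$), one deduces that a dense orbit of $M(X)$ must shadow, for $X^n$ close to $X$, an orbit entering $V_n$ and hence converging to $A^n$; since such shadowing orbits pass near $\sigma$, this forces $d(A^n, \sigma) \to 0$ along a subsequence — contradicting $\sigma \notin K$.

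A cleaner route, and the one I would actually write, is to invoke Lemma \ref{l1'} directly. Take a periodic orbit $O \subset M(X)$ of $X$ close to $\sigma$ (periodic orbits accumulate on $\sigma$ because $\sigma$ is Lorenz-like inside the transitive set $M(X)$; this uses the connecting-lemma-type density of periodic orbits in sectional-Anosov attractors, or more simply Lemma \ref{l1'} applied after producing one periodic orbit near $\sigma$). By Lemma \ref{l1'}, $O$ is hyperbolic saddle-type and the set of $q \in W^{uu}_X(p_0)$ (for $p_0 \in O$) with $\omega_X(q) = M(X)$ is dense in $W^{uu}_X(p_0)$. Such a $q$ has $\omega_X(q) = M(X) \ni \sigma$, so the forward orbit of $q$ passes arbitrarily near $\sigma$; but it also fills $M(X)$, hence comes near every part of $M(X)$. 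Now perturb: for $X^n$ close to $X$ the continuation $O^n$ of $O$ is still a saddle periodic orbit, and one tracks how $W^{uu}$ behaves. The point is that a forward orbit of $X$ that is dense in $M(X)$ and starts on $W^{uu}$ of a periodic orbit near $\sigma$ must, under the $C^1$-small perturbation, be captured by the attractor $A^n$ (since $A^n$ attracts an open set meeting $M(X)$ — here one uses that $A^n$ is $C^1$-close to $M(X)$ so its basin meets every neighbourhood of $M(X)$). Therefore $A^n$ intersects every neighbourhood of $\sigma$ for $n$ large, i.e. $A^n$ accumulates on $\sigma$, so $\sigma \in K$, the desired contradiction.

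The main obstacle is the gluing step: showing that the dense orbit of $M(X)$ near $\sigma$ really gets \emph{captured} by $A^n$ for the perturbed flow $X^n$, rather than escaping. This requires carefully using that $A^n$ is an \emph{attractor} (open basin, positively invariant isolating block) of $X^n$, that $A^n$ lies in $M(X)$ and is $C^1$-close to $M(X)$ as $n\to\infty$, and a compactness argument on the isolating blocks; one must rule out that the relevant orbit segment exits $M$ (impossible since $X$ is inwardly transverse to $\partial M$, a property preserved under $C^1$-perturbation) or lands on a different piece of $M(X^n)$ disjoint from $A^n$. I expect this to be handled exactly as in \cite{m}, replacing the three-dimensional cross-section arguments by the higher-dimensional singular cross-sections $\Sigma^t,\Sigma^b$ of Definition \ref{n1} and Lemma \ref{Ladilla3}, which is precisely why Lemma \ref{Ladilla3} was proved just above.
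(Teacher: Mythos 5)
Your argument has a genuine gap, and it sits exactly at the step you flag as the ``gluing step''. Both of your routes end with the inference: a (dense) orbit that passes near $\sigma$ gets captured by the basin of the attractor $A^n$ of $X^n$, \emph{therefore} $A^n$ comes close to $\sigma$. That inference goes in the wrong direction. Capture by the basin only constrains the $\omega$-limit of the captured orbit to lie in $A^n$; it says nothing about where $A^n$ sits relative to the earlier portion of that orbit, so an orbit may well pass near $\sigma$ at some finite time and only much later fall into an attractor located far from $\sigma$. Deferring this to \cite{m} does not close the gap, because the argument in \cite{m} (and in this paper) is not a basin/capture argument at all, and the singular cross-sections of Lemma \ref{Ladilla3} are used later, in the proof of Theorem \ref{thB}, not here. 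A secondary problem: your ``cleaner route'' starts from periodic orbits of $X$ accumulating on $\sigma$, i.e. density of periodic orbits in $M(X)$ for the unperturbed higher-dimensional sectional-Anosov flow, which is not established or cited in this paper; the only density of periodic orbits used here is inside the hyperbolic attractors $A^n$ of $X^n$, via the Anosov closing lemma.

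The missing idea is to push the attractor itself toward $\si(X)$, using that an attractor contains the unstable manifolds of its points. Concretely, the paper assumes $B_\delta(\si(X))\cap\bigcup_n A^n=\emptyset$, sets $H=\bigcap_{t\in\re}X_t\bigl(M\setminus B_{\delta/2}(\si(X))\bigr)$, which is compact, invariant and nonsingular, hence hyperbolic by Lemma \ref{l1}. Picking $x^n\in A^n$ with $x^n\to x\in H$, hyperbolic continuation gives local strong unstable manifolds $W^{uu}_{X^n}(x^n,\epsilon)$ of uniform size converging in the $C^1$ sense to $W^{uu}_X(x,\epsilon)$. Transitivity of $M(X)$ (via Lemma \ref{l1'}) yields a point $q\in W^{uu}_X(x,\epsilon)$ and a finite time $T>0$ with $X_T(q)\in B_{\delta_1}(\si(X))$; a tubular flow box around the finite orbit segment and $C^1$-closeness of $X^n$ to $X$ then send a point of $W^{uu}_{X^n}(x^n,\epsilon)$ into $B_{\delta_2}(\si(X))$ under $X^n_T$. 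Since $W^{u}_{X^n}(x^n)\subset A^n$ (here is where the attractor property is actually used) and $A^n$ is invariant, this puts a point of $A^n$ inside $B_{\delta}(\si(X))$, contradicting the assumption. Your write-up never uses $W^u_{X^n}(x^n)\subset A^n$, and without it (or some substitute) the accumulation of $A^n$ on $\si(X)$ does not follow.
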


\begin{proof}
We prove the lemma by contradiction. Then,
we suppose that there is
$\delta>0$, such that

\begin{equation}
\label{*}
B_\delta(Sing(X))\bigcap\left(\bigcup_{n\in I\!\! N}A^n\right)=\emptyset.
\end{equation}

In the same way as in \cite{m}, we define
$$H=\bigcap_{t\in I\!\! R}X_t\left(M\setminus B_{\delta/2}(Sing(X))\right).$$
Obviously $Sing(X)\cap H=\emptyset$.

Note that $H$ is
compact since $M$ is.
It follows that $H$ is a nonempty compact
set \cite{m}, which is clearly invariant for $X$.
It follows that $H$
is hyperbolic by Lemma \ref{l1}
since $Sing(X)\cap H=\emptyset$.
Denote by $E^s\oplus E^X\oplus E^u$ the corresponding hyperbolic splitting (see Definition \ref{hyperbolic}).

By the stability of hyperbolic sets we can fix a neighborhood $W$ of
$H$ and $\epsilon>0$ such that
if $Y$ is a vector field
$C^r$ close to $X$
and $H_Y$ is a compact invariant set
of $Y$ in $W$ then :
\begin{equation}
\begin{tabular}{l}\\
$H_Y$ is hyperbolic and its
hyperbolic splitting
$E^{s,Y}\oplus E^Y\oplus
E^{u,Y}$.\\

$dim(E^u)=dim(E^{u,Y})$, 
$dim(E^s)=dim(E^{s,Y})$.\\

The manifolds
$W^{uu}_Y(x,\epsilon)$, $x\in H_Y$, 
have uniform size $\epsilon$. \\
\end{tabular}
\label{e2}
\end{equation}

As $X^n\to X$, we have that:
\begin{equation}
\begin{tabular}{l}\\
$\bigcap_{t\in I\!\! R}X^n_t(M\setminus B_{\delta/2}(Sing(X))\subset W$, 
for all $n$ large.\\

$A^n\subset M\setminus B_{\delta/2}(Sing(X)) $
for all $n$, and $A^n\subset W$ for all $n$ large.\\
If $x^n\in A^n$
so that $x^n$ converges to some $x\in M$, 
then $x\in H$.\\
If $w \in W^{uu}_{X^n}(x^n,\epsilon)$,
the tangent vectors
of $W^{uu}_{X^n}(x^n,\epsilon)$\\ 
in this point are in $E^{u,X^n}_w$.\\
As $x^n\to x$,$W^{uu}_{X^n}(x^n,\epsilon) \to W^{uu}_X(x,\epsilon)$
in the sense of $C^1$ submanifolds \cite{pt}.\\
And $\angle(E^{u,X^n},E^u) \longrightarrow 0$, if $n\to\infty$\cite{m}.\\
\end{tabular}\\
\label{e3}
\end{equation}

Thus, we fix an open set
$U\subset
W^{uu}_X(x,\epsilon)$ containing the point $x$.
By (\ref{e1}), it follows that the periodic orbits 
of $X^n$ in $A^n$ are dense in $A^n$ [Closing-Anosov lemma].
Then we can suppose that for all point $x_n$, $x_n \in O_n \subset A^n$ 
and by Lemma \ref{l1'}, as $M(X)\cap Sing(X) \neq \emptyset$ 
and since $M(X)$ is transitive set, we have that
there are $q\in U$, $0<\delta_1<\delta_2<\frac{\delta}{2}$ and $T>0$ such that
$X_T(q)\in B_{\delta_1}(Sing(X)).$

Thus, there is
an open set $V_q$ containing $q$ such that
$X_T(V_q)\subset B_{\delta_1}(Sing(X)).$\cite[Tubular Flow Box Theorem]{dmp}

As $X^n\to X$ we have
\begin{equation}
\label{intersection}
X^n_T(V_q)\subset B_{\delta_2}(Sing(X))
\end{equation}
for all $n$ large (see Figure \ref{f.3}).

\begin{figure}[htv] 
\begin{center}
\input{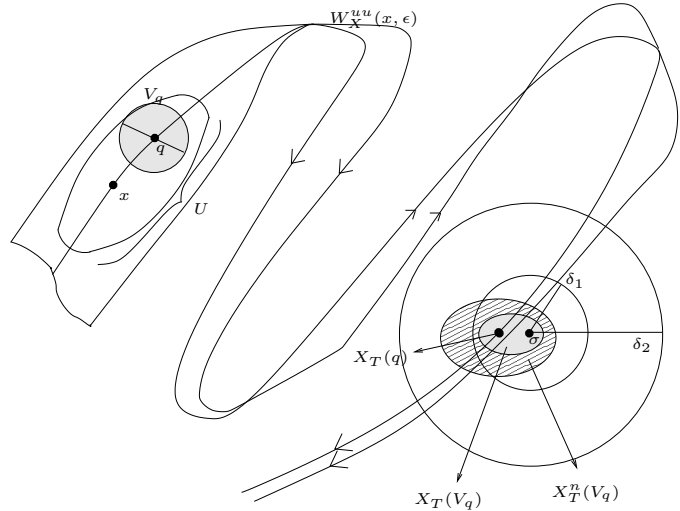} 
\caption{\label{f.3} Tubular Flow Box Theorem for $X_T(V_q)$.}
\end{center}
\end{figure}

However,
$W^{uu}_{X^n}(x^n,\epsilon)
\to W^{uu}_X(x,\epsilon)
$, $q\in U\subset W^{uu}_X(x,\epsilon)$,
$q\in V_q$ and $V_q$ is open.
So, $W^{uu}_{X^n}(x^n,\epsilon)
\cap V_q\neq\emptyset$ for all $n$ large.
Applying (\ref{intersection})
to $X^n$ for $n$ large we have
$$
X^n_T(W^{uu}_{X^n}(x^n,\epsilon))\cap
B_{\delta_2}(Sing(X))\neq\emptyset.
$$
As $W^{uu}_{X^n}(x^n,\epsilon)\subset W^u_{X^n}(x^n)$
the invariance of $ W^u_{X^n}(x^n)$
implies
$$
W^u_{X^n}(x^n)\cap B_{\delta/2}(Sing(X))
\neq\emptyset.
$$
Observe that $W^u_{X^n}(x^n)\subset A^n$
since
$x^n\in A^n$ and $A^n$ is an attractor.
We conclude that
$$
A^n\cap B_{\delta}(Sing(X))\neq\emptyset.
$$
This contradicts
(\ref{*}) and
the proof follows.
\end{proof}


\noindent
{\flushleft{\bf Proof of Theorem \ref{thB}: }}
By the lemma \ref{Ladilla4}, 
exists $\sigma$, such that $\sigma\in M(X)$ and

$$\sigma\in Sing(X)\bigcap Cl\left(\bigcup_{n\in I\!\! N}A^n\right).$$ 

By Theorem \ref{t1} we have that
$\sigma$ is Lorenz-like
and satisfies
$$
M(X)\cap W^{ss}_X(\sigma)=\{\sigma\}.
$$

By Proposition \ref{Ladilla1}, we can choose 
$\Sigma^t,\Sigma^b$, singular-cross section
for $\sigma$ and $M(X)$ such that
$$
M(X)\cap\left(\partial^h\Sigma^t\cup\partial^h\Sigma^b\right)
=\emptyset.
$$

As $X^n\to X$ we have that
$\Sigma^t,\Sigma^b$ is 
singular-cross section of $X^n$ too, thus
we can assume that $\sigma(X^n)=\sigma$
and $l^t\cup l^b\subset W^s_{X^n}(\sigma)$
for all $n$. [Implicit function theorem].
We have that the splitting
$E^s\oplus E^c$ persists by small
perturbations of $X$ \cite{hps}.

We have that the splitting
$E^s\oplus E^c$ persists by small
perturbations of $X$ \cite{hps}. The dominance condition
[Definition \ref{d2}-(2)]
together with \cite[Proposition 2.2]{d}
imply that for $*=t,b$ one has
$$
T_x\Sigma^*\cap \left(E^s_x\oplus E^X_x\right)
=T_xl^*,
$$
for all $x\in l^*$.

Denote by $\angle(E,F)$ the angle between
two linear subspaces.
The last equality implies
that there is $\rho>0$ such that
$$
\angle(T_x\Sigma^*\cap E_x^c,T_xl^*)>\rho,
$$
for all $x\in l^*$
($*=t,b$).
But $E^{c,n}\to E^c$ as $n\to\infty$.
So for all $n$ large we have
\begin{equation}
\label{angle}
\angle(T_x\Sigma^*\cap E_x^{c,n},T_xl^*)>\frac{\rho}{2},
\end{equation}
for all $x\in l^*$
($*=t,b$).
	
As in the previous section we fix a coordinate system $(x,y)=(x^*,y^*)$
in $\Sigma^*$ such that
$$
\Sigma^*= B^u[0,1] \times B^{ss}[0,1],
\,\,\,\,\,\,\,\,l^*=\{0\}\times B^{ss}[0,1]
$$
with respect to $(x,y)$.

Denote by $\Pi^*:\Sigma^*\to B^u[0,1]$ the projection, where $\Pi^*(x,y)=x.$

As before, given $\Delta>0$ we define
$\Sigma^{*,\Delta}= B^u[0,\Delta] \times B^{ss}[0,1].$

\begin{rk}
\label{Ladilla5}
The continuity of $E^{c,n}$ and (\ref{angle}) imply
that $\exists \Delta_0>0$ such that $\forall n$ large
the line 
$F^n$ is {\em transverse to $\Pi^*$}. By this we mean
that $F^n(z)$ is {\em not tangent
to the curves $(\Pi^*)^{-1}(c)$, $\forall c\in B^u[0,\Delta_0]$}.
\end{rk}

We define the line field $F^n$ in $\Sigma^{*,\Delta_0}$ by
$$
F^n_x=
T_x\Sigma^*\cap E^{c,n}_x, \,\,\,\,x\in \Sigma^{*,\Delta_0}.
$$

Now recall that $A^n$ is a hyperbolic attractor of type saddle
of $X^n$ for all $n$ (see (\ref{e1})) and 
that the periodic orbits of $X^n$ in $A^n$ are dense in $A^n$ (\cite{pt}).
Then, as $\sigma\in Cl\left(\cup_{n\in I\!\! N}A^n\right)$, there is a periodic orbit
sequence $O_n\in A^n$ accumulating on $\sigma$. It follows from
Lemma \ref{Ladilla3} applied to $Y^n=X^n$ that there is
$n_0\in I\!\! N$ such that either
$$
O_{n_0}\cap int(\Sigma^{t,\Delta_0})\neq\emptyset\,\,\,\,
\mbox{or}\,\,\,\,
O_{n_0}\cap int(\Sigma^{b,\Delta_0})\neq\emptyset.
$$
Because $O_{n_0}\subset A_{n_0}$
we conclude that either
$$
A^{n_0}\cap int(\Sigma^{t,\Delta_0})\neq\emptyset\,\,\,\,
\mbox{or}\,\,\,\,
A^{n_0}\cap int(\Sigma^{b,\Delta_0})\neq\emptyset.
$$

We shall assume that $A^{n_0}\cap int(\Sigma^{t,\Delta_0})\neq\emptyset$
(Analogous proof for the case $*=b$).
Note that $\partial^h \Sigma^{t,\Delta_0}\subset \partial^h
\Sigma^t$ by definition.
Then, by
Proposition \ref{Ladilla1} one has
$$
A\cap \partial^h\Sigma^{t,\Delta_0}=\emptyset.
$$

As $A^{n_0}$ and $\Sigma^{t,\Delta_0}$ are compact non-empty sets, 
$A^{n_0}\cap \Sigma^{t,\Delta_0}$ is a compact non-empty subset
of $\Sigma^{t,\Delta_0}$,  
hence there is $p\in \Sigma^{t,\Delta_0} \cap A^{n_0}$ such that

$$ dist(\Pi^t(\Sigma^{t,\Delta_0}\cap A^{n_0}),0)=dist(\Pi^t(p),0),$$

where $dist$ denotes the distance in $B^u[0,\Delta_0]$. 
Note that $dist(\Pi^t(p),0)$ is the minimum distance of
$\Pi^t(\Sigma^{t,\Delta_0} \cap A^{n_0})$ to $0$ in $B^u[0,\Delta_0]$.\\

As $p\in A^{n_0}$, we have that $W^u_{X^{n_0}}(p)$
is a well defined submanifold, 
since that $A^{n_0}$ is hyperbolic (\ref{e1}), and
$dim(E^{c})=dim(E^{c,n_0})$(\ref{e2}).\\

By domination [Definition \ref{d2}-(2)], 
$T_z(W^u_{X^{n_0}}(p))=E^{c,n_0}_z$ 
for every $z\in W^u_{X^{n_0}}(p)$. Thus 
$dim(W^u_{X^{n_0}}(p))=(n-s-1)$ (\ref{p1}). Hence,
$$
T_z(W^u_{X^{n_0}}(p))\cap T_z\Sigma^{t,\Delta_0}=
E^{c,n_0}_z\cap T_z\Sigma^{t,\Delta_0}=F^{n_0}_z
$$
for every $z\in W^u_{X^{n_0}}(p)\cap \Sigma^{t,\Delta_0}$.

This show that $W^u_{X^{n_0}}(p)\cap \Sigma^{t,\Delta_0}$ is transversal,
then we have that
$W^u_{X^{n_0}}(p)\cap \Sigma^{t,\Delta_0}$ contains some compact manifold. 
We denote this compact manifold by $K^{n_0}$. Note that $p \in K^{n_0}$ 
[See (\ref{f.4})].
As $K^{n_0} \subset W^u_{X^{n_0}}(p)\cap \Sigma^{t,\Delta_0}$, this
implies that $K^{n_0}$ is tangent to $F^{n_0}$.  

\begin{rk}
\label{p3}
As $dim(E^{c,n_0})=dim(W^u_{X^{n_0}}(p))=(n-s-1)$, by construction
we have that $dim(B^u[0,\Delta_0])=(n-s-1)$.\\  
As $W^u_{X^{n_0}}(p)$, $\Sigma^{t,\Delta_0}$ are submanifolds of $M$, 
$W^u_{X^{n_0}}(p)\cap \Sigma^{t,\Delta_0}$ is transversal and nonempty, 
then $W^u_{X^{n_0}}(p)\cap \Sigma^{t,\Delta_0}$
is a submanifold of $M$ and $dim(W^u_{X^{n_0}}(p))+dim(\Sigma^{t,\Delta_0}) \geq n$.
\end{rk}

\begin{figure}[htv] 
\begin{center}
\input{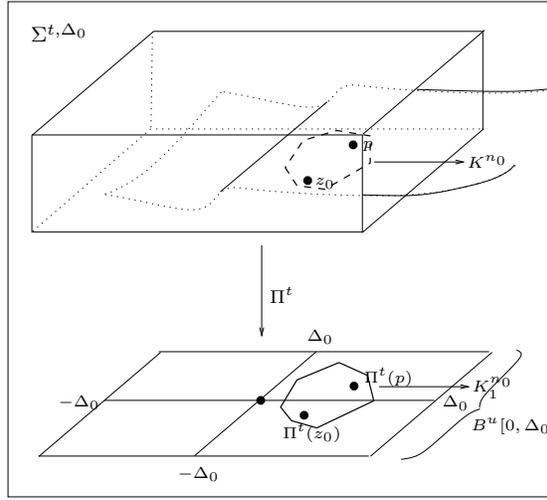} 
\caption{\label{f.4} The projection $\Pi^t(K^{n_0})=K^{n_0}_1$.}
\end{center}
\end{figure}

Then, since $F^{n_0}$ is {\em transverse}
to $\Pi^t$, we have that
$K^{n_0}$ is {\em transverse} to $\Pi^t$
(i.e. $K^{n_0}$ is transverse to the curves
$(\Pi^t)^{-1}(c)$, for every $c \in B^u[0,\Delta_0]$).
First we denote $\Pi^t(K^{n_0})=K^{n_0}_1$ 
the image of $K^{n_0}$ by the projection $\Pi^t$ in $B^u[0,\Delta_0]$. 
Note that $K^{n_0}_1\subset B^u[0,\Delta_0]$ and 
$\Pi^t(p)\in int(K^{n_0}_1)$.

As $dim(K^{n_0}_1)=dim(B^u[0,\Delta_0])$ (By \ref{p3}),
for this reason, we have that exists  
$z_0\in K^{n_0}$ such that
$$
dist(\Pi^t(z_0),0)<dist(\Pi^t(p),0).
$$

Since that $A^{n_0}$ is an attractor of $X^{n_0}$(\ref{e1}),
we have that $K^{n_0}\subset \Sigma^{t,\Delta_0} \cap A^{n_0}$,
thus $p\in A^{n_0}$
and $K^{n_0}\subset W^u_{X^{n_0}}(p)$.

As $A^{n_0}\cap \partial^h\Sigma^{t,\Delta_0}=\emptyset$
[Proposition \ref{Ladilla1}] and 
$dim(K^{n_0}_1)=dim(B^u[0,\Delta_0])$ (By Remark \ref{p3}),
we conclude that 
$$
dist(\Pi^t(\Sigma^{t,\Delta_0}\cap A^{n_0}),0)=0.
$$
As $A^{n_0}$ is closed, this last equality
implies
$$
A^{n_0}\cap l^t\neq\emptyset.
$$
Since $l^t\subset W^s_{X^{n_0}}(\sigma)$
and $A^{n_0}$ is closed invariant set for $X^{n_0}$
we conclude that $\sigma\in A^{n_0}$.
We have proved that $A^{n_0}$ contains a singularity of $X^{n_0}$.
But $A^{n_0}$ is a hyperbolic attractor of $X^{n_0}$
by the Property (\ref{e1}). Henceforth $A^{n_0}=\{\sigma\}$.
This contradicts the Property (\ref{e1})
and the proof follows.
\qed


\medskip 

\flushleft
A. M. L\'opez B\\
Instituto de Matem\'atica, Universidade Federal do Rio de Janeiro\\
Rio de Janeiro, Brazil\\
E-mail: barragan@im.ufrj.br

\end{document}